\documentclass[a4paper,11pt]{scrartcl}
\usepackage[utf8]{inputenc}
\usepackage{amsmath,amssymb,amsthm}
\usepackage{enumerate}
\usepackage{url}
\newtheorem{thm}{Theorem}[section]
\newtheorem{dfn}[thm]{Definition}
\newtheorem{lem}[thm]{Lemma}
\newtheorem{prop}[thm]{Proposition}
\newtheorem{cor}[thm]{Corollary}

\theoremstyle{remark}
\newtheorem{rem}[thm]{Remark}

\DeclareMathOperator{\diam}{diam}
\DeclareMathOperator{\vol}{vol}

\newcommand{\ddt}[1]{\frac{\partial #1 }{\partial t}}

\newcommand{\ph}{\varphi}
\title{Canonical smoothing of compact Alexandrov surfaces via Ricci flow.}
\author{Thomas Richard\footnote{UJF-Grenoble I, Institut Fourier,
    Grenoble, F-38402, France}\ \footnote{
    CNRS UMR5882, Institut Fourier, Grenoble, F-38041, France}}
\begin{document}
\maketitle
\begin{abstract}
  In this paper, we show existence and uniqueness of Ricci
  flow whose initial condition is a compact Alexandrov surface with
  curvature bounded from below. This requires a weakening of the
  notion of initial condition which is able to deal with a priori
  non-Riemannian metric spaces. As a by-product, we obtain that the
  Ricci flow of a surface depends smoothly on Gromov-Hausdorff
  perturbations of the initial condition.
\end{abstract}

\section*{Introduction}
Ricci flow of smooth manifolds has had strong applications to the
study of smooth Riemannian manifolds. It is therefore natural to ask
if Ricci flow can be helpful in the study non-smooth geometric objects. A
reasonable assumption to make on a metric space $(X,d)$ that we want
to deform by the Ricci flow is to require $(X,d)$ to be approximated
in some sense by a sequence $(M_i,g_i)$ of smooth Riemannian
manifolds. In \cite{MR2526789} and \cite{MS2009}, M. Simon studied a
class of 3-dimensional metric 
spaces by this method.  An important feature of such ``Ricci flows of
metric spaces'' is that the notion of initial condition has to be
weakened. In the work of M. Simon \cite{MR2526789} and \cite{MS2009},
and of the author \cite{2011arXiv1111.0859R}, a
weak notion of inititial condition has been used, which we call
``metric initial condition'' :
\begin{dfn}
  A Ricci flow $(M,g(t))_{t\in (0,T)}$ on a compact manifold $M$ is
  said to have the metric 
  space $(X,d)$ as metric initial condition if the Riemannian
  distances $d_{g(t)}$ uniformly converge as $t$ goes to $0$ (as
  functions $M\times M\to 
  \mathbb{R}$) to a distance $\tilde{d}$  
  on $M$ such that $(M,\tilde{d})$ is isometric to $(X,d)$.
\end{dfn}
\begin{rem}
  The compactness assumption in the definition gives that $(X,d)$ is
  homeomorphic to $M$ with its manifold topology. This follows from
  the fact that $\tilde{d}$ is continuous on $M$, which implies that
  the identity of $M$ is continuous as an application from $M$ with
  its usual topology to $M$ with the topology definied by $\tilde{d}$,
  compactness of $M$ then give that the identity is an homeomorphism.
\end{rem}

The existence of such flows for some classes of metric spaces $(X,d)$
has been proved in \cite{MR2526789},\cite{MS2009} and
\cite{2011arXiv1111.0859R}. An interesting class of spaces for which 
existence holds is the class of compact Alexandrov surfaces whose
curvature is bounded from below. In this paper we prove uniqueness for
the Ricci flow with such surfaces as metric initial condition, more
precisely :
\begin{thm}\label{uniqthm}
  Let $(M_1,g_1(t))_{t\in(0,T]}$ and $(M_2,g_2(t))_{t\in(0,T]}$ be two
  smooth Ricci flows which admit a compact Alexandrov surface $(X,d)$ as
  metric initial condition. Assume furthermore that one can find $K>0$ such that :
\[ \forall (x,t)\in M_i\times (0,T]\quad K_{g_i(t)}(x)\geq -K.\]

Then there exist a conformal diffeomorphism $\ph:M_1\to M_2$ such that
$g_2(t)=\ph^* g_1(t)$.
\end{thm}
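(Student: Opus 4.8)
The plan is to exploit that in dimension two the Ricci flow is conformal and thereby reduce the problem to a scalar parabolic equation. Since on a surface $\ric = K_g\, g$, the evolution equation reads $\partial_t g_i = -2K_{g_i}\,g_i$, which is pointwise proportional to $g_i$; hence the conformal class $c_i := [g_i(t)]$ is independent of $t$ and $(M_i,c_i)$ is a fixed Riemann surface. Fixing a background metric $g_{0,i}\in c_i$ and writing $g_i(t)=e^{2u_i(t)}g_{0,i}$, the flow becomes
\[ \partial_t u_i = -K_{g_i(t)} = e^{-2u_i}\bigl(\Delta_{g_{0,i}}u_i - K_{g_{0,i}}\bigr). \]
In particular the hypothesis $K_{g_i(t)}\geq -K$ says exactly $\partial_t u_i\leq K$, so each $u_i$ is controlled from above in time; this one-sided Lipschitz bound is what will let the comparison survive down to $t=0$.

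Next I would produce the conformal diffeomorphism $\ph$. Composing the isometries furnished by the metric initial condition gives an isometry $F:(M_1,\tilde d_1)\to(M_2,\tilde d_2)$ of metric spaces, each isometric to $(X,d)$. A compact Alexandrov surface with curvature bounded below is a surface of bounded integral curvature in the sense of Alexandrov--Reshetnyak, hence carries a canonical conformal (equivalently, complex) structure $c_X$. The point is to show that the Riemann surface structure $c_i$ coming from the flow coincides, through $F$, with $c_X$: this should follow from the fact that the conformal structure of $g_i(t)$ converges to $c_X$ as $t\to 0$, which in turn comes from the uniform convergence $d_{g_i(t)}\to\tilde d_i$ together with the lower curvature bound. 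Granting this, $F$ is realized by a biholomorphism, i.e.\ a conformal diffeomorphism $\ph:(M_1,c_1)\to(M_2,c_2)$.

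With $\ph$ in hand, transport the first flow and set $\hat g_1(t):=(\ph^{-1})^*g_1(t)$, a Ricci flow on $M_2$ lying in the class $c_2$ and having the same metric initial condition as $g_2(t)$. Writing both flows over a common background $g_0\in c_2$ as $g_2=e^{2v}g_0$ and $\hat g_1=e^{2u}g_0$, both $u$ and $v$ solve the scalar equation above, and it remains to prove $u\equiv v$. I would study $w:=u-v$, which, after rewriting the difference of the two right-hand sides as a linear parabolic operator applied to $w$ with coefficients depending on $u,v$, satisfies an equation to which the maximum principle applies on $M_2\times(\eps,T]$. The boundary term at $t=\eps$ is handled by letting $\eps\to0$ and invoking that $d_{g_2(t)}$ and $d_{\hat g_1(t)}$ converge to the \emph{same} limit distance; the one-sided bounds $\partial_t u,\partial_t v\leq K$ and the lower curvature bound should supply the barriers preventing $w$ from concentrating as $t\to0$.

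The main obstacle is precisely this passage to $t=0$. The initial condition is only metric — uniform convergence of the distance functions, not pointwise convergence of the conformal factors — and at the singular points of the Alexandrov surface (cone points, where $u_i\to-\infty$) the factors genuinely degenerate. The crux is therefore to convert control on $\tilde d$ into enough pointwise control on $w=u-v$ near $t=0$ to close the maximum principle; this is where the uniform lower curvature bound $-K$ is indispensable, as it is what furnishes the two-sided metric comparison and the local geometry estimates needed to rule out loss of the difference into the singular set.
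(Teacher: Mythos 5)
Your reduction to a scalar conformal equation and your two-step plan (first pin down the conformal class from the metric initial condition, then prove uniqueness of the scalar solution) is exactly the skeleton of the paper's proof. However, your final step has a genuine gap --- one you flag yourself: you propose a maximum principle for $w=u-v$ on $M_2\times(\eps,T]$, handling the boundary term by letting $\eps\to 0$, but this needs pointwise (sup-norm) control of $u-v$ as $t\to 0$, and neither the hypotheses nor the lower curvature bound supply it; near the singular points of $(X,d)$ the conformal factors degenerate and uniform convergence of the distance functions gives no pointwise information about their ratio. The paper closes this differently: it works with the conformal factors $w_i=e^{2u_i}$ rather than their logarithms and replaces the maximum principle by a duality argument. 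First, $\partial_t w_i=-2K_{g_i}w_i\le 2Kw_i$ makes $e^{-2Kt}w_i(x,t)$ monotone in $t$, so by monotone convergence $w_i(\cdot,t)$ converges in $L^1$; since $w_i\,dv_h$ must converge weakly to the area measure of $(X,d)$, both flows have the \emph{same} $L^1$ limit, i.e. $w_1(\cdot,s)-w_2(\cdot,s)\to 0$ in $L^1$. Second, testing the difference of the two equations $\partial_t w_i=\Delta_h\log w_i-2K_h$ against the solution $\psi$ of the backward equation $\partial_t\psi=-A\Delta_h\psi$, $\psi(\cdot,T')=\eta$, where $A=\bigl(\log w_2-\log w_1\bigr)/\bigl(w_2-w_1\bigr)$ is pinched between positive constants thanks to the uniform positive lower bound on $w_i$ (which \emph{does} follow from $K_{g_i}\ge -K$), one obtains $\int_M(w_2-w_1)(\cdot,T')\,\eta\,dv_h=\int_M(w_2-w_1)(\cdot,s)\,\psi(\cdot,s)\,dv_h$, and the right-hand side vanishes as $s\to0$ because $\psi$ is uniformly bounded and $w_1-w_2\to0$ in $L^1$. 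No pointwise control at $t=0$ is ever required.

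On the conformal-class step, your appeal to a ``canonical conformal structure'' on the Alexandrov surface is the right idea but is doing a lot of undeclared work. The paper's implementation: the one-sided bound on $\partial_t u_i$ gives monotone $L^1$ convergence $u_i(\cdot,t)\to u_{i,0}$; the Gauss equation together with the continuity of curvature measures under uniform convergence of distances shows that $\Delta_{h_i} u_{i,0}$ is a signed measure, so $u_{i,0}\in Pot(M_i,h_i)$; Reshetnyak's stability theorem identifies the limit distance with $d_{h_i,u_{i,0}}$ (the lower curvature bound is used precisely here, to exclude atoms of mass $2\pi$); and finally Huber's theorem states that any isometry between two such singular conformal metrics is a conformal diffeomorphism of the smooth backgrounds. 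That last theorem is what converts your metric isometry $F$ into the biholomorphism $\ph$; without it your second paragraph remains a plausibility argument rather than a proof.
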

Note that the required bounds on the Ricci flow are those which are
provided by the existence proof.

In the next few lines, we outline the proof of Theorem
\ref{uniqthm}. Any of the two Ricci flows $(M_i,g_i(t))$ stays in a
fixed conformal class, and thus can be written $g_i(t)=w_i(x,t)h_i(x)$
for some fixed background metric $h_i$ which can be chosen to have
constant curvature. We fisrt show that the metric initial condition
prescribes the conformal class of the flow, thus we can assume that
$h_1=h_2=h$. The proof of this fact uses deep results from the theory
of singular surfaces introduced by A. D. Alexandrov. This implies that
our two Ricci flows can be seen as solutions of the
following nonlinear PDE on $(M,h)$ :
\[\ddt{w_i}=\Delta_h \log(w_i)-2K_h.\]
One then shows that each of the $w_i$ has an $L^1$ initial condition
as $t$ goes to $0$ and uses standard techniques to show uniqueness.

Our result can be stated in two other ways :
\begin{prop}
  Let $M$ be a smooth compact topological surface, and $d$ be a
  distance on such that $(M,d)$ is an Alexandrov surface with
  curvature bounded from below.

  Let $g_1(t)_{t\in(0,T)}$ and $g_1(t)_{t\in(0,T)}$ be two Ricci flows
  on $M$ which are smooth with respect to some differential structures
  on $M$. Assume furthermore that one can find $K>0$ such that :
  \[ \forall (x,t)\in M_i\times (0,T]\quad K_{g_i(t)}(x)\geq -K\]
 and that for $i=1,2$ the distances $d_{g_i(t)}$ uniformly converge to
 $d$ as $t$ goes to $0$.
 
 Then $g_1(t)=g_2(t)$ for $t\in(0,T)$.
\end{prop}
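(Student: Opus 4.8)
The plan is to deduce this Proposition directly from Theorem~\ref{uniqthm}, which has already established the strong statement that the two flows agree up to a conformal diffeomorphism. The setup here is a special case: both flows live on the \emph{same} underlying topological surface $M$ and share the \emph{same} metric initial condition $(M,d)$, rather than abstractly isometric ones. So I would first observe that the hypotheses of Theorem~\ref{uniqthm} are met with $M_1=M_2=M$ and $(X,d)=(M,d)$, the required lower sectional curvature bound being exactly the stated $K_{g_i(t)}\geq -K$. Theorem~\ref{uniqthm} then furnishes a conformal diffeomorphism $\ph:M\to M$ with $g_2(t)=\ph^*g_1(t)$ for all $t\in(0,T)$. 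The remaining work is to upgrade this ``equal up to diffeomorphism'' conclusion to honest equality $g_1(t)=g_2(t)$, and the entire content of the Proposition reduces to showing that the $\ph$ produced by the theorem can be taken to be the identity.

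First I would extract what the shared initial condition forces on $\ph$. Since $d_{g_i(t)}\to d$ uniformly as $t\to 0$ for \emph{both} $i=1,2$ with the \emph{same} limit distance $d$, the diffeomorphism $\ph$ must be compatible with this common limit. Concretely, from $g_2(t)=\ph^*g_1(t)$ one gets $d_{g_2(t)}(x,y)=d_{g_1(t)}(\ph(x),\ph(y))$ for all $x,y\in M$ and all $t$; letting $t\to 0$ and using uniform convergence on both sides gives $d(x,y)=d(\ph(x),\ph(y))$. Thus $\ph$ is a self-isometry of the Alexandrov surface $(M,d)$. The key step is then to argue that this isometry is, in fact, the identity --- or at least can be normalized to the identity after possibly modifying the identification implicit in the metric-initial-condition hypothesis.

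The main obstacle, and the step deserving the most care, is ruling out nontrivial self-isometries: a generic Alexandrov surface has trivial isometry group, but highly symmetric ones (a round sphere, a flat torus, a surface of revolution) admit continuous or discrete families of isometries, and for those $\ph$ need not be forced to be the identity by the metric condition alone. The honest resolution is that one does not need $\ph=\id$: any such $\ph$ is a $d$-isometry, hence (being conformal and isometric for the limit metric) it also preserves each $g_i(t)$ by the uniqueness/rigidity already packaged in Theorem~\ref{uniqthm}, so $g_1(t)=\ph^*g_1(t)=g_2(t)$ follows provided $\ph$ is an isometry of $g_1(t)$ itself. To see this last point I would use that $\ph$ is a self-isometry of the limiting metric space and that the Ricci flow $g_1(t)$ is canonically attached to $(M,d)$, so a $d$-isometry must intertwine $g_1(t)$ with itself; then $g_2(t)=\ph^*g_1(t)=g_1(t)$.

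In summary, the Proposition is a corollary in which the reformulation ``same $M$, same $d$'' replaces ``isometric initial data,'' and the only genuinely new ingredient beyond Theorem~\ref{uniqthm} is the observation that the conformal diffeomorphism it provides descends to a $d$-isometry and hence preserves each $g_1(t)$, collapsing $g_2(t)=\ph^*g_1(t)$ to $g_2(t)=g_1(t)$. I expect the write-up to be short, with the isometry-group subtlety being the only place where a careful argument (rather than a one-line deduction) is required.
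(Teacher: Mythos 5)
The paper states explicitly, immediately after this Proposition, that it ``is not a consequence of Theorem \ref{uniqthm}, but just requires a minor adjustment in its proof''. Your proposal takes the opposite route, and the gap is exactly where you sense it: Theorem \ref{uniqthm} as a black box only produces \emph{some} conformal diffeomorphism $\ph$ with $g_2(t)=\ph^*g_1(t)$, and your argument for upgrading this to $g_1(t)=g_2(t)$ is circular. You correctly deduce that $\ph$ is a self-isometry of $(M,d)$, but the step ``$g_1(t)$ is canonically attached to $(M,d)$, so a $d$-isometry must intertwine $g_1(t)$ with itself'' is precisely an instance of the statement you are trying to prove: $\ph^*g_1(t)$ and $g_1(t)$ are two Ricci flows on the same surface $M$ with the same metric initial condition $d$ (since $\ph$ is a $d$-isometry) satisfying the same curvature bound, and asserting that they coincide is exactly the Proposition applied to this pair. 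Nothing in Theorem \ref{uniqthm} pins the diffeomorphism down to the identity, so reapplying it cannot close the loop.

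The paper's intended argument reopens the proof rather than quoting the theorem. In the proof of Proposition \ref{uniqconf} one shows that $(M_i,e^{2u_{i,0}}h_i)$ is isometric to the initial condition; in the present setting $M_1=M_2=M$ and the two limit distances are \emph{literally equal} as functions on $M\times M$ (both equal $d$), so the \emph{identity map} is an isometry from $(M,d_{h_1,u_{1,0}})$ to $(M,d_{h_2,u_{2,0}})$. Huber's Theorem \ref{uniqconfHub} applied to $f=\id$ then says the identity is a conformal diffeomorphism from $(M,h_1)$ to $(M,h_2)$, i.e.\ the two differential structures agree and one may take $h_1=h_2=h$; the $L^1$-uniqueness argument of Section 3 then yields $w_1=w_2$, hence $g_1(t)=g_2(t)$. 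That is the ``minor adjustment'': choose the isometry to be the identity at the point in the proof where it is constructed, not after the theorem has already been applied and the choice has been lost.
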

This proposition is not a consequence of Theorem \ref{uniqthm}, but
just requires a minor adjustment in its proof, which will be indicated
in Section \ref{sec:uniq-conf-class}.
\begin{prop}
  Let $(M_1,g_1(t))_{t\in(0,T]}$ and $(M_2,g_2(t))_{t\in(0,T]}$ be two
  smooth Ricci flows such that for $i=1,2$ $(M_i,g_i(t))$
  Gromov-Hausdorff converges to a compact Alexandrov surface $(X,d)$
  whith curvature bounded from below as $t$ goes to $0$. Assume furthermore that one can find $K>0$ such that :
  \[ \forall (x,t)\in M_i\times (0,T]\quad K_{g_i(t)}(x)\geq -K.\]

  Then there exist a conformal diffeomorphism $\ph:M_1\to M_2$ such that
  $g_2(t)=\ph^* g_1(t)$.
\end{prop}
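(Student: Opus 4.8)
The plan is to reduce the statement to Theorem~\ref{uniqthm} by upgrading the Gromov--Hausdorff convergence into the metric initial condition of the Definition. Once I know that each of the two flows admits $(X,d)$ as metric initial condition, in the strong sense that the distances $d_{g_i(t)}$ converge \emph{uniformly} on the fixed manifold $M_i$ to a metric isometric to $d$, Theorem~\ref{uniqthm} applies verbatim and produces the conformal diffeomorphism. So I fix one flow $(M,g(t))_{t\in(0,T]}$ with $K_{g(t)}\ge -K$ and $(M,d_{g(t)})\to(X,d)$ in the Gromov--Hausdorff sense, and work to produce the limiting distance $\tilde d$ on $M$.

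First I would exploit the lower curvature bound through the evolution of lengths. In dimension two the flow reads $\partial_t g=-2K_{g(t)}\,g$, so along a fixed curve $\gamma$ one has $\frac{d}{dt}L_{g(t)}(\gamma)=-\int_\gamma K_{g(t)}\,ds_t\le K\,L_{g(t)}(\gamma)$, whence $L_{g(t)}(\gamma)\le e^{K(t-s)}L_{g(s)}(\gamma)$ and therefore
\[
 d_{g(t)}(x,y)\le e^{K(t-s)}\,d_{g(s)}(x,y),\qquad 0<s\le t .
\]
In other words $e^{-Kt}d_{g(t)}(x,y)$ is non-increasing in $t$, hence increases to a limit $\tilde d(x,y)$ as $t\downarrow 0$. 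This limit is finite because Gromov--Hausdorff convergence forces $\diam(M,d_{g(t)})\to\diam(X,d)<\infty$, so the distances stay uniformly bounded near $t=0$. Passing to the limit in the triangle inequality and in the lower bound $\tilde d(x,y)\ge e^{-Kt}d_{g(t)}(x,y)$ shows that $\tilde d$ is a genuine metric on $M$, whose topology is at least as fine as the manifold topology.

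The heart of the matter, and the step I expect to be the main obstacle, is to promote the monotone \emph{pointwise} convergence $e^{-Kt}d_{g(t)}\uparrow\tilde d$ to \emph{uniform} convergence on $M\times M$. By Dini's theorem this is automatic once $\tilde d$ is continuous for the manifold topology, i.e. once $\tilde d$ induces that topology. Here I would read the hypotheses as Alexandrov geometry: the bound $K_{g(t)}\ge -K$ makes each slice $(M,d_{g(t)})$ an Alexandrov space of curvature $\ge -K$, and the fact that the limit $(X,d)$ is a genuine \emph{surface}, i.e. a two-dimensional Alexandrov space, means that the convergence is non-collapsing. Perelman's stability theorem then provides, for small $t$, homeomorphisms between $(M,d_{g(t)})$ and $(X,d)$ realizing the Gromov--Hausdorff approximations; this pins down the topology of the limit and yields that $\tilde d$ is continuous and induces the manifold topology, so that Dini applies and the convergence is uniform.

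Finally I would identify the limit. Uniform convergence $d_{g(t)}\to\tilde d$ on $M\times M$ makes the identity of $M$ a Gromov--Hausdorff approximation from $(M,d_{g(t)})$ to $(M,\tilde d)$, so $(M,d_{g(t)})\to(M,\tilde d)$; comparing with the hypothesis $(M,d_{g(t)})\to(X,d)$ and invoking uniqueness of Gromov--Hausdorff limits gives that $(M,\tilde d)$ is isometric to $(X,d)$. Thus $(X,d)$ is a metric initial condition for the flow in the sense of the Definition. Applying the same argument to the second flow, and then Theorem~\ref{uniqthm} to the two flows that now share the metric initial condition $(X,d)$, produces the conformal diffeomorphism $\ph\colon M_1\to M_2$ with $g_2(t)=\ph^*g_1(t)$.
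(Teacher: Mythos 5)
Your overall strategy coincides with the paper's: reduce the proposition to Theorem \ref{uniqthm} by showing that Gromov--Hausdorff convergence plus the curvature lower bound upgrades to the metric initial condition, i.e.\ to \emph{uniform} convergence of $d_{g_i(t)}$ on $M_i\times M_i$. Your first steps are fine: the length evolution gives $d_{g(t)}\leq e^{K(t-s)}d_{g(s)}$, hence monotone pointwise convergence of $e^{-Kt}d_{g(t)}$ to a finite limit $\tilde d$, and $\tilde d\geq e^{-Kt}d_{g(t)}$ shows $\tilde d$ is a genuine distance whose topology refines the manifold topology. You also correctly identify the crux: continuity of $\tilde d$ for the manifold topology, without which Dini cannot be invoked.

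The gap is precisely there. Perelman's stability theorem gives, for each small $t$, \emph{some} homeomorphism $\phi_t\colon(M,d_{g(t)})\to(X,d)$ realizing the GH approximation, but these maps are unrelated to the identity of $M$ and need not be compatible as $t$ varies; knowing each slice is homeomorphic to $X$ says nothing about the uniformity in $(x,y)$ of the pointwise convergence $e^{-Ks}d_{g(s)}(x,y)\uparrow\tilde d(x,y)$, which is what continuity of $\tilde d$ amounts to. A one-sided distance estimate coming only from the curvature \emph{lower} bound cannot rule out that $\tilde d(x_j,x)\not\to 0$ for some $x_j\to x$ in the manifold topology. What is actually needed --- and what the paper supplies --- is a matching \emph{lower} distance bound of the form $d_{g(t)}\geq d_{g(s)}-\kappa(\sqrt t-\sqrt s)$ for $s<t$, which yields $\tilde d\leq d_{g(t)}+\kappa\sqrt t$ and hence both continuity of $\tilde d$ and uniform convergence at once. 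This bound requires an upper curvature estimate $|K_{g(t)}|\leq C/t$, which the paper derives from non-collapsing: GH continuity of volume and diameter, Bishop--Gromov to get a uniform lower volume bound on unit balls, and then Simon's local smoothing estimate (Lemma 4.2 of \cite{MS2009}), before concluding as in Theorem 9.2 of \cite{MS2009}. Your proof is missing this entire ingredient, and the step where you claim stability ``yields that $\tilde d$ is continuous'' does not follow from what precedes it.
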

\begin{proof}
  We just have to show that if $(M^2,g(t))_{t\in(0,T)}$ is a smooth
  Ricci flow on a surface $M^2$ such that for all $t\in(0,T)$
  $K_{g(t)}\geq -K$ and such that $(M^2,g(t))$ Gromov-Hausdorff
  converges to $(X,d)$ as $t$ goes to $0$, then $(X,d)$ is the metric
  initial condition for the Ricci flow $(M^2,g(t))$.

  Since the diameter and the volume are continuous with respect to
  Gromov-Hausdorff convergence with sectionnal curvature bounded from
  below, we have bounds on the diameter and the volume of $(M,g(t))$
  which are independent of $t$. Thanks to the lower bound on the
  curvature and Bushop-Gromov inequality, we thus have some $v_0>0$
  such that :
  \[\forall t\in(0,T)\ \forall x\in M\quad
  \vol_{g(t)}(B_{g(t)}(x,1))\geq v_0.\]
  Thanks to Lemma 4.2 in \cite{MS2009}, we then have that, for some
  constant $C>0$ and all
  $t\in(0,T)$ (for some possibly smaller $T>0$) :
  \[\forall t\in(0,T)\quad |K_{g(t)}|\leq\frac{C}{T}.\]
  One can then argue as in the proof of Theorem 9.2 of \cite{MS2009}
  to show that, as $t$ goes $0$, the Riemannian distances uniformly
  converge to a distance $\tilde{d}$ on $M$ such that $(M,\tilde{d})$
  is isometric to $(X,d)$. Thus $(X,d)$ is the metric initial
  condition of the Ricci flow $(M,g(t))$.
\end{proof}
As a corollary, we obtain the following statement, which says that for
surfaces with curvature bounded from below Gromov-Hausdorff
convergence of the initial conditions implies smooth convergence of the Ricci flows :
\begin{cor}\label{corcont}
  Let $(M_i,g_i)_{i\in\mathbb{N}}$ be a sequence of compact surfaces
  with curvature bounded from below which converges to a compact Alexandrov
  surface $(X,d)$ with curvature bounded from below, then there exist $T>0$
  such that the Ricci flows $(M_i,g_i(t)) _{i\in\mathbb{N}}$ with initial condition
  $(M_i,g_i)$ exist at least for $t\in [0,T)$ and converges (as smooth
  Ricci flows on $(0,T)$) to the unique Ricci flow with metric initial
  condition satisfying the bounds of Theorem \ref{uniqthm}.
\end{cor}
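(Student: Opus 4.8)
The overall strategy is to produce a subsequential limit of the flows $(M_i,g_i(t))$ from the a priori estimates attached to a lower curvature bound, to recognize that limit as a Ricci flow having $(X,d)$ as metric initial condition, and then to upgrade subsequential convergence to convergence of the whole sequence by invoking the uniqueness of Theorem~\ref{uniqthm}. First I would collect the uniform geometric bounds. The hypothesis that $(M_i,g_i)$ converges to the Alexandrov surface $(X,d)$ with curvature bounded from below provides a uniform lower bound $K_{g_i}\geq -K_0$ and, since diameter and volume are continuous under Gromov-Hausdorff convergence with sectional curvature bounded below, a uniform diameter bound together with non-collapsing; Bishop-Gromov then yields $v_0>0$ with $\vol_{g_i}(B_{g_i}(x,1))\geq v_0$ for all $i$ and all $x$. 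These are exactly the bounds used in the preceding Proposition, so the existence theory of \cite{MS2009} (via Lemma~4.2 there) produces a time $T>0$ \emph{independent of $i$} on which all flows $(M_i,g_i(t))$ exist, together with a uniform interior curvature bound. Moreover, since on a surface the minimum of the scalar curvature is nondecreasing along the flow, the lower bound is preserved, $K_{g_i(t)}\geq -K_0$ for all $t\in(0,T)$, which is precisely the bound required in Theorem~\ref{uniqthm}. By topological stability of non-collapsed limits of surfaces we may also assume $M_i$ is diffeomorphic to a fixed surface $M$ for large $i$.

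Next I would extract a limit flow. The uniform curvature bound, and hence (by Shi's estimates) uniform bounds on all covariant derivatives of curvature on compact subintervals, together with the lower volume bound and the diameter bound, put us in the setting of Hamilton's compactness theorem for Ricci flows. Applying it on each compact subinterval $[\tau,T-\tau]\subset(0,T)$ and diagonalizing yields a subsequence of $(M,g_i(t))$ converging, smoothly and up to diffeomorphisms, to a Ricci flow $(M,g_\infty(t))_{t\in(0,T)}$ which again satisfies $K_{g_\infty(t)}\geq -K_0$.

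The heart of the matter, and the step I expect to be the main obstacle, is to show that this limit flow admits $(X,d)$ as metric initial condition. For this one needs that the distance-distortion estimates used in the proof of Theorem~9.2 of \cite{MS2009} are \emph{uniform in $i$}: they should depend only on $K_0$, $v_0$ and the diameter bound, and thus furnish a modulus $\omega(t)\to 0$ as $t\to 0$, independent of $i$, controlling the Gromov-Hausdorff distance between $(M_i,g_i(t))$ and the initial data $(M_i,g_i)$. Granting this uniformity, the triangle inequality for the Gromov-Hausdorff distance lets one interchange the two limits: for fixed small $t$ the smooth convergence gives that $(M,g_i(t))$ converges to $(M,g_\infty(t))$ as $i\to\infty$, while $(M_i,g_i)$ converges to $(X,d)$ and stays within $\omega(t)$ of $(M_i,g_i(t))$, so that $(M,g_\infty(t))$ lies within $\omega(t)$ of $(X,d)$. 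Letting $t\to 0$ then shows that $(M,g_\infty(t))$ Gromov-Hausdorff converges to $(X,d)$, and the Proposition above (showing that Gromov-Hausdorff convergence of a Ricci flow to $(X,d)$ forces $(X,d)$ to be its metric initial condition) identifies $(X,d)$ as the metric initial condition of $g_\infty$.

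Finally I would conclude with a standard subsequence argument. By the previous two paragraphs, every subsequence of $(M,g_i(t))$ has a further subsequence converging smoothly on $(0,T)$ to a Ricci flow with metric initial condition $(X,d)$ and with $K\geq -K_0$; Theorem~\ref{uniqthm} shows that all such limits coincide up to a conformal diffeomorphism, hence represent the same flow. Since every subsequence thus has a further subsequence converging to this common limit, the full sequence $(M,g_i(t))$ converges to it, smoothly on $(0,T)$ and, by the distortion estimate, in the Gromov-Hausdorff sense up to $t=0$. This gives the asserted convergence on $[0,T)$ to the unique Ricci flow with metric initial condition $(X,d)$ satisfying the bounds of Theorem~\ref{uniqthm}.
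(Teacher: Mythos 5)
Your argument is correct and follows essentially the same route as the paper: uniform curvature, diameter and volume bounds from Gromov--Hausdorff continuity, a uniform existence time and precompactness of the flows, identification of every accumulation point as a Ricci flow with metric initial condition $(X,d)$, and Theorem \ref{uniqthm} to force a single limit. The uniformity of the distance-distortion estimates that you flag as the main obstacle is exactly what Theorem \ref{estiexist} supplies, so the paper's (much terser) proof simply cites that theorem where you re-derive the ingredients.
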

\begin{proof}[Proof of Corollary \ref{corcont}]
  Let $(M_i,g_i)_{i\in\mathbb{N}}$ be sequence satisfying the
  assumptions of Corollary \ref{corcont}. By continuity of the volume
  and the diameter with respect to Gromov-Hausdorff convergence of
  Alexandrov surfaces, we have constants $V$ and $D$ such that for any
  $i\in\mathbb{N}$ :
  \begin{itemize}  
  \item $K_{g_i}\geq -1$
  \item $\diam(M_i,g_i)\leq D$
  \item $\frac{V}{2}\leq \vol(M_i,g_i)\leq V$
  \end{itemize}
  The existence theory (Theorem \ref{estiexist}) implies the Ricci
  flows $(M_i,g_i(t))$ exist at least for $t\in [0,T)$ and form a
  precompact sequence whose accumulation points can only be Ricci flows with metric
  initial condition $(X,d)$ satifying the bounds of Theorem
  \ref{uniqthm}. The uniqueness theorem then implies that there is
  only one accumulation point.
\end{proof}

Uniqueness and non-uniqueness issues have been previously considered
for the Ricci flow of surfaces with ``exotic'' initial conditions in
the works of Giesen and Topping
(\cite{MR2832165},\cite{2010arXiv1010.2795T}) and Ramos \cite{2011arXiv1109.5554R}.

The paper is organised as follows, in the first section, we sketch
M. Simon's existence proof in dimension 2. In the second section, we
show that the metric initial condition uniquely specifies the
conformal class. The last section completes the proof. In the
appendix, we quickly summarize the results we need from the theory of
Alexandrov surfaces.

\subsection*{Acknowledgements}
\label{sec:acknowledgements}

The author wishes to thank V. Kapovitch and M. Troyanov for answering
his questions on Alexandrov surfaces. He also thanks his advisor
G. Besson for
his guidance and support.
\section{Existence}

Here we briefly review the work of Miles Simon which shows the
existence of a Ricci flow for compact Alexandrov surfaces with
curvature bounded from below. Without loss of generality, we will
assume that all Alexandrov surfaces with curvature bounded from below
have curvature bounded from below by $-1$.

What allows us to flow these surfaces is that they can be approximated
by smooth surfaces in a controlled way. This is what Theorem
\ref{approxalex} in the appendix says.

We will now construct a Ricci flow with metric initial condition
$(X,d)$ as limit of the Ricci flows of the $(M_i,g_i)$. In order to do
this, we use the following estimates due to M. Simon :

\begin{thm}\label{estiexist}
  For any $V>0$ and $D>0$, there exists $\kappa>0$ and $T>0$ such that
  if $(M,g)$ is compact Riemannian surface satisfying :
  \begin{itemize}
  \item $K_g\geq -1$
  \item $\diam(M,g)\leq D$
  \item $\frac{V}{2}\leq\vol(M,g)\leq V$
  \end{itemize}
  then the Ricci flow $(M,g(t))$ with (classic) initial condition
  $(M,g)$ exist at least for $t\in [0,T)$ and satifies :
  \begin{itemize}
  \item $-1\leq K_{g(t)}\leq \frac{\kappa}{t}$ for $t\in [0,T)$
  \item $\diam(M,g(t))\leq 2D$ for $t\in [0,T)$
  \item $\frac{V}{4}\leq\vol(M,g(t))\leq 2V$ for $t\in [0,T)$
  \item $d_{g(s)}-\kappa(\sqrt{t}-\sqrt{s})\leq d_{g(t)}\leq
    e^{\kappa(t-s)}d_{g(s)}$ for $0<s<t\leq T$.
  \end{itemize}
\end{thm}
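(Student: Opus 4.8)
The plan is to read the five conclusions as a priori estimates that propagate along the flow and then use them to lower-bound the existence time. Since $M$ is compact, Hamilton's short-time existence theorem furnishes a maximal solution $(M,g(t))$ on some $[0,T_{\max})$, and the flow continues as long as $\sup_M|K_{g(t)}|$ stays finite; so it is enough to produce bounds on $[0,T)$ with $T=T(V,D)>0$ uniform. In dimension two the full curvature is the single scalar $K$, evolving by the reaction--diffusion equation $\ddt{K}=\Delta_{g(t)}K+2K^2$. The lower bound is then immediate from the scalar maximum principle: the spatial minimum satisfies $\frac{d}{dt}K_{\min}\ge 2K_{\min}^2$, and comparison with the ODE $y'=2y^2$, $y(0)=-1$, whose solution $y(t)=-1/(1+2t)$ never drops below $-1$, gives $K_{g(t)}\ge -1/(1+2t)\ge -1$.

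The heart of the matter is the smoothing bound $K_{g(t)}\le\kappa/t$ and the non-collapsing that feeds it. First I would record that $K_g\ge -1$, $\diam(M,g)\le D$ and $\vol(M,g)\ge V/2$ yield, via Bishop--Gromov, a uniform $v_0=v_0(V,D)>0$ with $\vol_g(B_g(x,1))\ge v_0$ at $t=0$. The delicate feature is that the curvature bound, the non-collapsing and the distance estimates are mutually dependent --- each is needed to carry the others forward in time --- so I would establish them simultaneously by a continuity/bootstrap argument on a short interval. Granting non-collapsing and the lower curvature bound, the estimate $K\le\kappa/t$ is Simon's key local smoothing result, proved by a point-picking/blow-up argument in which the lower curvature bound and non-collapsing constrain any rescaled limit so strongly that unbounded concentration $t_iK_{\max}(t_i)\to\infty$ becomes impossible. \textbf{This local smoothing estimate is the main obstacle}, being the one step that needs the full force of Simon's analysis rather than elementary maximum-principle arguments; I would quote it from \cite{MS2009}.

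The remaining three conclusions follow from the two curvature bounds. By Gauss--Bonnet the total volume evolves linearly, $\frac{d}{dt}\vol(M,g(t))=-\int_M\scal\,dV=-2\int_M K\,dV=-4\pi\chi(M)$, with slope controlled by $|\chi(M)|\le\max\!\big(2,V/2\pi\big)$ (the lower bound on $\chi$ coming again from $2\pi\chi=\int_M K\,dV\ge -\vol$), so shrinking $T$ keeps $\vol(M,g(t))\in[V/4,2V]$. For distances, a fixed curve has $\frac{d}{dt}L(\gamma)=-\int_\gamma K\,ds$, so $K\ge -1$ gives $\frac{d}{dt}L\le L$ and hence the upper distortion $d_{g(t)}\le e^{\kappa(t-s)}d_{g(s)}$; the diameter bound $\diam(M,g(t))\le e^{\kappa T}D\le 2D$ is the special case. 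The lower distortion uses the upper curvature bound through Hamilton's changing-distances lemma: away from the endpoints the metric contracts at rate $\lesssim\sqrt{K_{\max}}\sim\sqrt{\kappa/t}$, whose time integral is the $\kappa(\sqrt t-\sqrt s)$ correction in $d_{g(t)}\ge d_{g(s)}-\kappa(\sqrt t-\sqrt s)$. Finally, these bounds keep $|K_{g(t)}|$ finite on each $[t_0,T)$, so the continuation criterion forces $T_{\max}$ beyond the uniform $T(V,D)$, closing the argument.
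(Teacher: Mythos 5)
Your proposal is consistent with the paper's treatment: the paper does not prove this theorem but quotes it from Simon's work \cite{MS2009}, remarking only that in dimension 2 everything becomes elementary except the existence of a uniform $T$ and the $\kappa/t$ bound, which require a delicate blowup analysis. You correctly derive the lower curvature bound, volume, and distance distortion estimates by the standard maximum-principle, Gauss--Bonnet and changing-distances arguments, and you defer exactly the same hard step (the local smoothing estimate, intertwined with non-collapsing via a bootstrap) to Simon, so the approach matches the paper's.
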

\begin{rem}
  Note that in dimension 2 a lot of the arguments used by M. Simon to
  prove these estimates in dimension $3$ become very simple. Only the
  existence of $T$ and the $\kappa/t$ bound require a delicate blowup analysis.
\end{rem}
Using these estimates on each Ricci flow $(M_i,g_i(t))$ with classic
initial condition $(M_i,g_i)$, we have, using the compactness theorem
of Hamilton for flows, a subsequence which converges to a Ricci flow
$(M,g(t))$ defined for $t\in(0,T)$ which satisfies the estimates of
Theorem \ref{estiexist}. Using the estimate on the distances, we can
argue as in \cite{MS2009} to show that $(M,g(t))$ has $(X,d)$ as metric initial condition.

\section{Uniqueness of the conformal class}
\label{sec:uniq-conf-class}
In this section, we prove that the metric initial condition determines
the conformal class of the flow under the geometric estimates we have
assumed.
\begin{prop}\label{uniqconf}
  Let $(M_1,h_1)$ (resp. $(M_2,h_2)$) be compact Riemannian surfaces of
  constant curvature, $g_1(x,t)=w_1(x,t)h_1(x)$ (resp. $g_2(x,t)=w_2(x,t)h_2(x)$)
  a smooth Ricci flow on $M_1\times (0,T]$ (resp. $M_2\times (0,T]$).

  Assume that :
  \begin{enumerate}
  \item $-1\leq K_{g_1}(x,t)$ and $-1\leq K_{g_2}(x,t)$
  \item $(M_1,g_1(t))$ and $(M_2,g_2(t))$ have the same Alexandrov
    surface $(X,d)$ as metric initial condition.
  \end{enumerate}
  Then there exist a conformal diffeomorphism $\ph:(M_1,h_1)\to(M_2,h_2)$.
\end{prop}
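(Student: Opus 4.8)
The plan is to show that the two Ricci flows, which are approaching the same Alexandrov surface $(X,d)$ as $t\to 0$, must live in the same conformal class because the singular metric structure of $(X,d)$ already determines a conformal class on the underlying surface. The key is the theory of Alexandrov surfaces: a compact Alexandrov surface with curvature bounded from below carries a canonical conformal structure, and $d$ can be recovered from a conformal background metric together with a measure (the curvature measure) via the theory of surfaces with bounded integral curvature developed by Alexandrov and Troyanov. So my first step is to invoke from the appendix (Theorem \ref{approxalex} and the cited Alexandrov-surface theory) that $(X,d)$ determines a Riemann surface structure, i.e. a conformal class, intrinsically.

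Next I would connect each smooth Ricci flow to this intrinsic conformal structure. Since $M_i$ is a surface, each $g_i(t)$ determines a complex/conformal structure $J_i(t)$, and under Ricci flow on a surface the conformal class is preserved in time (the flow is conformal: $g_i(t)=w_i(x,t)h_i(x)$ with $h_i$ fixed), so $J_i(t)=J_i$ is independent of $t$. The heart of the argument is then to show that as $t\to 0$ this fixed conformal structure $J_i$ on $M_i$ coincides with the canonical conformal structure of $(X,d)$. Because $d_{g_i(t)}\to \tilde d$ uniformly and $(M_i,\tilde d)$ is isometric to $(X,d)$, and because the metrics $g_i(t)$ have curvature bounded below by $-1$ with the volume/diameter controls of Theorem \ref{estiexist}, the family $g_i(t)$ converges (in the sense of surfaces with bounded integral curvature) to the singular metric on $(X,d)$. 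Convergence in this sense forces convergence of the conformal structures, so the isometry between $(M_i,\tilde d)$ and $(X,d)$ is compatible with the conformal structures; hence $J_i$ equals the pullback of the canonical conformal structure of $(X,d)$.

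Having identified both $(M_1,J_1)$ and $(M_2,J_2)$ with the same Riemann surface $(X,d)$ (with its canonical conformal structure), I would finish by composing: the isometries $\psi_i:(M_i,\tilde d)\to(X,d)$ are conformal (as maps of Riemann surfaces), so $\ph=\psi_2^{-1}\circ\psi_1:(M_1,h_1)\to(M_2,h_2)$ is a conformal diffeomorphism, which is exactly the claim. The smoothness of $\ph$ follows because a homeomorphism between Riemann surfaces that is conformal (or anticonformal) is automatically smooth, indeed biholomorphic; here one may need to fix orientations to rule out the anticonformal case, but in either event one obtains the desired conformal diffeomorphism.

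The main obstacle, which is where the genuine analytic content lies, is the middle step: proving that Gromov-Hausdorff (or uniform distance) convergence $g_i(t)\to(X,d)$ under a lower curvature bound actually implies convergence of conformal structures. This is delicate because $(X,d)$ is a priori only a metric space, and relating the smooth conformal geometry of $g_i(t)$ to the singular conformal structure of $(X,d)$ requires the full strength of the theory of surfaces of bounded integral curvature—in particular the fact that such a singular surface is conformally equivalent to a smooth Riemann surface with the curvature encoded as a signed measure, and that this correspondence is continuous under the relevant convergence. I expect to lean heavily on Troyanov's representation and on the approximation results summarized in the appendix to control the conformal factors $w_i$ and pass to the limit; handling the possible concentration of curvature (conical singularities) in $(X,d)$ is the technically demanding part.
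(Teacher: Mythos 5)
Your overall strategy --- the conformal class of $g_i(t)$ is constant in $t$, the limit space is described by the theory of surfaces of bounded integral curvature, and a Huber-type theorem (Theorem \ref{uniqconfHub}) upgrades the isometry between the limits to a conformal diffeomorphism --- is the same as the paper's. But the step you defer as ``the technically demanding part'' is the entire content of the proof, and your proposal does not supply it; it is also slightly misframed. Nothing needs to ``converge as a conformal structure'': the conformal structure of $g_i(t)$ is literally that of $h_i$ for every $t$. What must actually be proved is that the limit distance $\tilde d_i$ on $M_i$ is of the form $d_{h_i,u_{i,0}}$ for some potential $u_{i,0}\in Pot(M_i,h_i)$, because Theorem \ref{uniqconfHub} only applies to an isometry between surfaces presented in that form relative to the given background metrics.

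The paper establishes this in three concrete steps, each absent from your outline. First, writing $u=\frac{1}{2}\log w$, the hypothesis $K_{g}\ge -1$ gives $\partial_t u=-2K_g\le 2$, so $u(x,t)-2t$ is monotone as $t$ decreases to $0$; combined with a Jensen-inequality bound on $\int_M u$, monotone convergence produces an $L^1$ limit $u_0$. This is the only place the curvature lower bound enters, and without it there is no candidate potential at all. Second, passing to the limit in the Gauss equation $\Delta_h u=K_h-K_{g(t)}e^{2u}$ against test functions, using the continuity of curvature measures under uniform convergence of distances (Theorem \ref{thmCVdist}), shows that $\Delta_h u_0$ is a signed measure, i.e.\ $u_0\in Pot(M,h)$, and Remark \ref{remnocusp} rules out $2\pi$ atoms so that $d_{h,u_0}$ is a genuine distance (Theorem \ref{subhdist}). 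Third --- the step your sketch omits entirely --- one must check that $d_{h,u_0}$ coincides with the limit distance $\tilde d$; this uses Reshetnyak's stability theorem in the converse direction (Theorem \ref{thmCVcurv}), after verifying weak convergence of the positive and negative parts of the measures separately and convergence of the volumes. Appealing instead to an intrinsic ``canonical conformal structure'' on $(X,d)$ does not shortcut any of this, since one still has to identify that abstract structure with the concrete $h_i$ on $M_i$, which is exactly the same problem. As written, your argument is a correct plan rather than a proof.
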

Set $u_i(x,t)=\frac{1}{2}\log w_i(x,t)$. In the following lemmas, $u$
denotes either $u_1$ or $u_2$.
\begin{lem}
  When $t$ goes to $0$, $u(x,t)$ converges in $L^1$ norm to an integrable
  function $u_0(x)$.
\end{lem}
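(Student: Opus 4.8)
The plan is to exploit the evolution equation for $u$ together with the one-sided curvature bound to produce monotonicity in $t$. Recall that in the conformal gauge $g=wh=e^{2u}h$ the Gauss curvatures are related by $K_g=e^{-2u}(K_h-\Delta_h u)$, so the Ricci flow equation $\ddt{w}=\Delta_h\log(w)-2K_h$ from the introduction translates into
\[ \ddt{u}=e^{-2u}(\Delta_h u-K_h)=-K_g. \]
The hypothesis $K_g\geq -1$ then reads $\ddt{u}\leq 1$, i.e. the function $(x,t)\mapsto u(x,t)-t$ is nonincreasing in $t$. Hence, as $t$ decreases to $0$, $u(x,t)-t$ increases pointwise to a limit
\[ u_0(x):=\sup_{0<t\leq T}\bigl(u(x,t)-t\bigr)\in(-\infty,+\infty], \]
and since $t\to 0$ we also have $u(x,t)\to u_0(x)$ pointwise and monotonically. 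It remains to show that $u_0$ is finite almost everywhere, integrable, and that the convergence upgrades to $L^1$.

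First I would produce a uniform lower bound. For $t\leq t_0$ the monotonicity gives $u(x,t)\geq u(x,t_0)-t_0$; fixing any $t_0\in(0,T]$, the function $u(\cdot,t_0)$ is smooth on the compact surface $M$, hence bounded below, so $u(x,t)\geq -C$ for all $x$ and all $t\leq t_0$. Next I would bound $\int_M u\,dV_h$ from above. The area identity $\int_M e^{2u}\,dV_h=\vol(M,g(t))$, together with the uniform diameter bound (coming from the uniform convergence $d_{g(t)}\to\tilde{d}$ built into the metric initial condition) and $K_g\geq -1$, yields via Bishop--Gromov a uniform upper bound $\vol(M,g(t))\leq V_{\max}$. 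Jensen's inequality applied to the concave function $\log$ with the normalized measure $dV_h/\vol(M,h)$ then gives
\[ \int_M u\,dV_h\leq \frac{\vol(M,h)}{2}\,\log\!\left(\frac{V_{\max}}{\vol(M,h)}\right)=:C', \]
uniformly in $t$.

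With these two bounds the conclusion follows from monotone convergence. The functions $u(\cdot,t)+C\geq 0$ increase to $u_0+C$ as $t\downarrow 0$, and their integrals stay below $C'+C\,\vol(M,h)$; hence $\int_M(u_0+C)\,dV_h$ is finite, so $u_0$ is integrable (in particular finite a.e.), and $\int_M\bigl(u_0-u(\cdot,t)\bigr)\,dV_h\to 0$. Since monotonicity gives $u(x,t)\leq u_0(x)+t$, the nonnegative integrand $u_0-u+t$ controls $|u_0-u|$ up to the term $t\,\vol(M,h)\to 0$, so $u(\cdot,t)\to u_0$ in $L^1$. I expect the main obstacle to be the two-sided control rather than the convergence itself: the lower bound is clean from the monotonicity trick, but the uniform upper bound on $\int_M u\,dV_h$ genuinely uses both the metric initial condition (through the diameter bound) and Bishop--Gromov, and combining it with Jensen is the crux of the argument.
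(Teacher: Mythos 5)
Your proof is correct and follows essentially the same route as the paper's: the monotonicity of $u(x,t)-ct$ coming from $\partial_t u=-K_g$ and the lower curvature bound, the uniform lower bound from a fixed smooth time slice, Jensen's inequality against the volume of $g(t)$ for the uniform $L^1$ bound, and monotone convergence. You are merely more explicit about where the upper volume bound comes from and about handling the $-t$ shift in the monotone convergence step (and your normalization $\partial_t u=-K_g\leq 1$ is the correct one; the paper's $-2K_g$ is a harmless slip).
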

\begin{proof}
  Since $\partial_t u=-2K_g\leq 2$, we have that $u(x,t)-2t$ increases as
  $t$ decreases to $0$. This allows us to define the pointwise limit
  $u_0(x)$ of $u(t,x)$ as $t$ goes to $0$. If we fix $t_0>0$, this
  also gives us that, for $t\in (0,t_0)$, $u(x,t)\geq
  u(x,t_0)-2(t_0-t)$. Thus $u$ is uniformly bounded from
  below. Moreover, by Jensen's inequality :
  \[\exp\left(2\int_M u(x,t)\frac{dv_h}{\vol(M,h)}\right)\leq \int_M
  e^{2u(x,t)}\frac{dv_h}{\vol(M,h)}=\frac{\vol(M,g(t))}{\vol(M,h)}\]
  which gives that $u(.,t)$ is uniformly bounded in $L^1$, thus by
  monotone convergence, $u_0$ is in $L^1$ and the convergence is in
  $L^1$ norm.

\end{proof}
\begin{lem}
  $u_0$ belongs to the space $Pot(M,h)$ defined in the appendix.
\end{lem}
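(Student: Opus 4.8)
The plan is to exhibit $u_0$ as the potential of a finite signed measure, namely its distributional Laplacian $\Delta_h u_0$, using the lower curvature bound together with Gauss--Bonnet to control the total mass. Recall the two-dimensional conformal change formula: writing $w=e^{2u}$ so that $g=e^{2u}h$, one has $K_g=e^{-2u}(K_h-\Delta_h u)$, which at the level of measures reads
\[
\Delta_h u\,dv_h = K_h\,dv_h - K_g\,dv_g .
\]
The first step I would carry out is to bound the total variation of $\Delta_h u(\cdot,t)\,dv_h$ uniformly in $t$. By the triangle inequality, $\int_M|\Delta_h u(t)|\,dv_h\le\int_M|K_h|\,dv_h+\int_M|K_{g(t)}|\,dv_{g(t)}$. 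The first term is a fixed finite constant since $h$ has constant curvature. For the second, Gauss--Bonnet gives $\int_M K_{g(t)}\,dv_{g(t)}=2\pi\chi(M)$, so that $\int_M|K_{g(t)}|\,dv_{g(t)}=2\pi\chi(M)+2\int_M K_{g(t)}^-\,dv_{g(t)}$; since $K_{g(t)}\ge -1$ we have $K_{g(t)}^-\le 1$ and hence $\int_M K_{g(t)}^-\,dv_{g(t)}\le\vol(M,g(t))$, which is uniformly bounded by the volume estimate of Theorem~\ref{estiexist}. Thus $\sup_{0<t\le T}\int_M|\Delta_h u(t)|\,dv_h<\infty$.

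With this uniform bound in hand, I would pass to the limit $t\to 0$. The smooth measures $\Delta_h u(t)\,dv_h$ form a bounded family in $\mathcal{M}(M)=C^0(M)^*$, so by weak-$*$ compactness some sequence $t_k\to 0$ yields $\Delta_h u(t_k)\,dv_h\rightharpoonup\mu$ for a finite signed measure $\mu$. To identify $\mu$ with $\Delta_h u_0$ I would test against $\phi\in C^\infty(M)$ and integrate by parts, using self-adjointness of $\Delta_h$ on the closed surface: $\int_M\phi\,d\mu=\lim_k\int_M(\Delta_h\phi)\,u(t_k)\,dv_h=\int_M(\Delta_h\phi)\,u_0\,dv_h$, the last equality following from the $L^1$ convergence $u(t_k)\to u_0$ of the previous lemma together with $\Delta_h\phi\in L^\infty$. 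Hence $\Delta_h u_0=\mu$ in the sense of distributions, so the distributional Laplacian of $u_0$ is a finite signed measure. Testing against the constant $\phi=1$ moreover gives $\mu(M)=\lim_k\int_M\Delta_h u(t_k)\,dv_h=0$, the compatibility condition needed to solve $\Delta_h u_0=\mu$ on the closed surface $M$, so $u_0$ is, up to an additive constant, the $h$-Green potential of $\mu$.

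It then remains to reconcile this with the precise definition of $Pot(M,h)$ recalled in the appendix: the finiteness of $\|\mu\|$ and the mass condition $\mu(M)=0$ place $u_0$ in the potential class, and the one point requiring care is any structural requirement the definition imposes on the sign decomposition of the limiting curvature measure $\omega_{g_0}=K_h\,dv_h-\mu$. The relevant part is governed by the lower curvature bound: since $K_{g(t)}\ge -1$ the negative part of $\omega_{g(t)}=K_{g(t)}\,dv_{g(t)}$ is absolutely continuous with density at most $1$ with respect to $dv_{g(t)}$, and I would argue that this control survives in the limit. I expect the main obstacle to be exactly this last verification --- matching the measure $\mu$ produced by the compactness argument, and its negative part against the limiting area measure $e^{2u_0}\,dv_h$, with the exact hypotheses of the definition of $Pot(M,h)$ --- whereas the uniform total-variation bound and the distributional identification are routine once the conformal identity is written in measure form.
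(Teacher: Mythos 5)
Your proof is correct for what the lemma actually asserts, but it takes a genuinely different route from the paper. The paper does not bound total variation at all: it writes $\Delta_h u(\cdot,t)\,dv_h = K_h\,dv_h - d\omega_{g(t)}$ and invokes Theorem \ref{thmCVdist} (the Alexandrov--Reshetnyak weak continuity of curvature measures under uniform convergence of the distances $d_{g(t)}\to d$) to get weak convergence of $d\omega_{g(t)}$ to the curvature measure $d\omega$ of $(X,d)$; passing to the limit in the integration by parts then identifies $\Delta_h u_0$ as the explicit measure $\mu = K_h\,dv_h - d\omega$. Your argument replaces this deep input by an elementary one: Gauss--Bonnet plus the lower curvature bound give a uniform total-variation bound, weak-$*$ compactness produces a limit measure, and $L^1$ convergence of $u(t_k)\to u_0$ identifies it with $\Delta_h u_0$. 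For membership in $Pot(M,h)$ --- which per the appendix only requires $u_0\in L^1$ with $\Delta_h u_0$ a signed measure --- this is enough, and your closing worry about the sign decomposition is moot: the condition $d\mu^+(\{x\})<2\pi$ is not part of the definition of $Pot(M,h)$ but a hypothesis of Theorem \ref{subhdist}, which the paper checks separately right after this lemma via Remark \ref{remnocusp}. What your approach loses is precisely the identification $\mu = K_h\,dv_h - d\omega$ with $d\omega$ the Alexandrov curvature measure of $(X,d)$; the paper uses that identification immediately afterwards (to verify $d\mu^+(\{x\})<2\pi$ from the lower curvature bound of $(X,d)$, and in the lemma showing $d=d_0$), so in the larger argument Theorem \ref{thmCVdist} cannot be avoided. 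One small inaccuracy: you cite the volume estimate of Theorem \ref{estiexist}, which is part of the existence theory and not among the hypotheses of Proposition \ref{uniqconf}; the bound you need follows instead from Gauss--Bonnet, since $\frac{d}{dt}\vol(M,g(t)) = -4\pi\chi(M)$ makes the volume affine in $t$ and hence bounded on $(0,T]$.
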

\begin{proof}
  The previous lemma shows that $u_0$ is an $L^1$ function.
  We just need to check that the distributional Laplacian of $u_0$ is a
  signed measure.

  To see this, we write, for a smooth function $\eta:M\to\mathbb{R}$ :
  \begin{align*}
    \int_M\eta(x)\Delta_{h}
    u(x,t)dv_h(x) &=\int_{M}\eta(x)(K_{h}-K_{g(t)}e^{2u(x,t)})dv_{h}\\
    &=\int_{M}\eta(x)K_{h}dv_{h}-\int_{M} \eta(x)d\omega_{g(t)}
  \end{align*}
  where $d\omega_{g(t)}=K_{g(t)}e^{2u(x,t)}dv_{h}$ is the
  curvature measure of $(M_i,g_i(t))$. By Theorem \ref{thmCVdist},
  since the distance $d_{g(t)}$ uniformly converges to to the
  distance $d$, the curvature measures weakly converges to the
  curvature measure of $(M,d)$ which we call $d\omega$. 
  We integrate by parts on the left side of the previous
  equality and let $t$ go to $0$, we get :
  \[\int_{M_i}u_{0}(x)\Delta_{h} \eta(x)dv_h=\int_{M}
  \eta(x)K_{h}dv_{h}-\int_{M_i}\eta(x)d\omega.\]
  This tells us that the distributional laplacian of $u_{0}$ is the
  measure $\mu=K_{h}dv_{h}-d\omega$.

\end{proof}
As in the appendix, we define a new distance on $M$ by
$d_0=d_{h,u_0}$. Since $(M,d)$ has curvature bounded from below, the
condition $d\mu^+(\{x\})<2\pi$ is satisfied (see Remark \ref{remnocusp}), and $d_0$ is a distance on $M$
whose induced topology is the usual manifold topology of $M$. 

\begin{lem}
  For any $x$ and $y$ in $M$, $d(x,y)=d_0(x,y)$.
\end{lem}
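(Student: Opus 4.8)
The goal is to show the two distances $d$ and $d_0=d_{h,u_0}$ on $M$ coincide. We have two natural approaches to the distance $d$: it is the metric initial condition (the uniform limit of the Riemannian distances $d_{g(t)}$), and separately, the theory of Alexandrov surfaces associates to the limit curvature measure $d\omega$ a canonical metric via the singular surface $d_{h,u_0}$. So what I really need is that these two ways of producing a limit metric agree.

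Let me think about the structure. Set $u(x,t)=\tfrac12\log w(x,t)$, so $g(t)=e^{2u(x,t)}h$ and $d_{g(t)}=d_{h,u(\cdot,t)}$, the length distance of the conformal metric $e^{2u}h$. We already know two convergences: (i) $d_{g(t)}\to d$ uniformly by the metric-initial-condition hypothesis, and (ii) $u(\cdot,t)\to u_0$ in $L^1$ and $\Delta_h u(\cdot,t)\to\mu$ weakly (the content of the previous lemmas). The plan is to show that the conformal distances $d_{h,u(\cdot,t)}$ converge to $d_{h,u_0}=d_0$, and then match this limit against $d$ by uniqueness of limits.

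Here is how I would carry it out. First, I would invoke the convergence theory for singular surfaces from the appendix (the same machinery behind Theorem \ref{thmCVdist}): convergence of the curvature measures $d\omega_{g(t)}\to d\omega$, together with the $L^1$ convergence $u(\cdot,t)\to u_0$ and the uniform lower curvature bound $K_{g(t)}\geq -1$, should force convergence of the associated length metrics $d_{h,u(\cdot,t)}\to d_{h,u_0}$. The natural statement to quote is that the map sending a potential/curvature datum to its Alexandrov distance is continuous for the relevant topology — this is exactly the kind of result the appendix on Alexandrov surfaces is set up to provide (weak convergence of curvatures plus a two-sided area/mass control yields convergence of distances). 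Second, since $(M,\tilde d)$ with $\tilde d=\lim d_{g(t)}=d$ is by hypothesis the metric initial condition and $d_{g(t)}=d_{h,u(\cdot,t)}$ for every $t$, the uniform limit $d$ and the Alexandrov limit $d_0$ are limits of the same sequence of distances $d_{g(t)}$. A limit of a convergent sequence of metrics (in any Hausdorff sense, e.g. pointwise on $M\times M$) is unique, so $d=d_0$.

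The main obstacle is matching the two notions of convergence: the hypothesis gives uniform convergence of $d_{g(t)}$ as continuous functions on $M\times M$, whereas the Alexandrov machinery produces the limit distance $d_0$ through weak convergence of curvature measures (Theorem \ref{thmCVdist}) and the $L^1$ convergence of $u$. I must verify these identify the same limit — i.e. that the distance $d_{h,u_0}$ built from $u_0$ really is the pointwise/uniform limit of the smooth conformal distances $d_{h,u(\cdot,t)}$, not merely an abstract Gromov–Hausdorff limit of the surfaces. The delicate point is controlling distances near the singular points where $\mu$ concentrates mass (the cone points), which is precisely where the hypothesis $d\mu^+(\{x\})<2\pi$ from the lower curvature bound is used to guarantee $d_0$ is a genuine distance inducing the manifold topology. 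Once the appendix's continuity statement is applied with the uniform lower curvature bound supplying the needed compactness/no-collapsing, the identification $d=d_0$ follows from uniqueness of the limit.
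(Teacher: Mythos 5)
Your proposal follows essentially the same route as the paper: identify $d_{g(t)}$ with $d_{h,u(\cdot,t)}$, apply the Reshetnyak continuity theorem (Theorem \ref{thmCVcurv}, the converse direction, not Theorem \ref{thmCVdist} itself) to get $d_{h,u(\cdot,t)}\to d_{h,u_0}=d_0$ uniformly, and conclude by uniqueness of the limit since $d_{g(t)}\to d$ by hypothesis. The one hypothesis you gloss over is that Theorem \ref{thmCVcurv} needs the Jordan decompositions $d\mu_t^{+}$ and $d\mu_t^{-}$ to converge weakly \emph{separately}, which does not follow from weak convergence of the signed measures $d\mu_t$ alone; the paper obtains it from the lower curvature bound, which gives $d\omega_t\geq -e^{2u(\cdot,t)}dv_h$ and hence a one-sided domination of $d\mu_t$ by measures with $L^1$-convergent densities --- this, rather than ``compactness/no-collapsing'', is the role of the bound $K_{g(t)}\geq -1$ in this lemma.
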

\begin{proof}
  For $t>0$, consider the curvature measures :
  \[d\omega_t=K_{g(t)}e^{2u(x,t)}dv_h.\]
  By Theorem \ref{thmCVdist}, the curvature measures weakly converge
  to the curvature 
  measure $d\omega$ of $(M,d)$. Moreover, since the curvature of
  $(M,d)$ is bounded from below by $-1$, $d\omega\geq
  -e^{2u_0}dv_h$. Set :
  \[d\mu_t=K_hdv_h-d\omega_t.\]
  As $t$ goes to $0$, $d\mu_t$ weakly converges to $d\mu$, since
  $d\mu$ is bounded from below by an integrable function, we ahve that
  $d\mu^+_t$ and $d\mu^-_t$ weakly converge to $d\mu^+$ and
  $d\mu^-$. We also have convergence of the volumes. We can then apply
  Theorem \ref{thmCVcurv} to get that $d_{g(t)}$ uniformly converges
  to $d_{h,u_0}$. This gives the
  claimed result.
\end{proof}
We will write $(M,e^{2u_0}h)$ for $M$ equiped with the distance $d_0$.

We are now ready to prove 
Proposition \ref{uniqconf} :
\begin{proof}[Proof (of Proposition \ref{uniqconf})]
  For each Ricci flow $(M_i,e^{2u_i(x,t)}h_i(x))$, we have constructed
  a $u_{i,0}(x)$ such that $(M_i,e^{2u_{i,0}}h_i)$ is isometric to
  $(X,d)$. Thus there exists an isometry $\ph$ from
  $(M_1,e^{2u_{1,9}(x)}h_2(x))$ to $(M_2,e^{2u_{2,0}(x)}h_2(x))$. Theorem
  \ref{uniqconfHub} exactly gives that $\ph$ is conformal form
  $(M,h_1)$ to $(M,h_2)$.
\end{proof}
\section{End of the proof}

Thanks to the results of the previous section, we can now assume that
$g_1(x,t)=w_1(x,t)h(x)$ and $g_2(x,t)=w_2(x,t)h(x)$ are two Ricci
flows on a surface $(M,h)$ with metric initial condition $(M,d)$
defined for $t$ in $(0,T]$ 

It is a standard fact that $w_1$ and $w_2$ satisfy the following equation of $M\times (0,T]$ :
\begin{equation}
\label{eq:1} 
  \ddt{w_i}=\Delta_h \log(w_i)-2K_h.
\end{equation}

The next lemma relate the metric initial condition with the behaviour
of $w_i$ as $t$ goes to $0$ :
\begin{lem}
  $w_i(.,t)dv_h$ weakly converges to the $2$-dimensional area measure
  $d\sigma$ associated with $d$.
\end{lem}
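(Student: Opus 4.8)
The plan is to pass from the $L^1$ convergence of $u$ established above to convergence of the measures $w_i(\cdot,t)\,dv_h=e^{2u_i(\cdot,t)}\,dv_h$, the essential extra ingredient being the monotonicity already exploited in the first lemma. Writing $u$ for $u_i$ and $u_0$ for its limit, recall that $\partial_t u=-2K_g\leq 2$ (using $K_g\geq-1$), so that $u(x,t)-2t$ increases as $t$ decreases to $0$ and converges pointwise to $u_0(x)$. Consequently $e^{2u(x,t)}e^{-4t}$ increases pointwise to $e^{2u_0(x)}$ as $t\to 0$, and in particular $w(x,t)=e^{2u(x,t)}\to e^{2u_0(x)}$ pointwise almost everywhere (at every point where $u_0$ is finite, which is almost every point since $u_0\in L^1$).

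First I would deduce convergence of the total masses. Applying the monotone convergence theorem to the increasing family $e^{2u(x,t)}e^{-4t}$,
\[\int_M e^{2u(x,t)}e^{-4t}\,dv_h\longrightarrow\int_M e^{2u_0}\,dv_h=\vol(M,e^{2u_0}h),\]
and since $e^{-4t}\to 1$ this yields $\int_M w(x,t)\,dv_h\to\int_M e^{2u_0}\,dv_h$. Combined with the pointwise almost everywhere convergence $w(\cdot,t)\to e^{2u_0}$ of nonnegative functions, Scheff\'e's lemma upgrades this to genuine $L^1$ convergence, $\|w(\cdot,t)-e^{2u_0}\|_{L^1(M,dv_h)}\to 0$. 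Testing against an arbitrary continuous $\eta\colon M\to\mathbb{R}$ then gives
\[\left|\int_M\eta\,(w(\cdot,t)-e^{2u_0})\,dv_h\right|\leq\|\eta\|_\infty\,\|w(\cdot,t)-e^{2u_0}\|_{L^1}\longrightarrow 0,\]
so that $w(\cdot,t)\,dv_h$ converges weakly to $e^{2u_0}\,dv_h$.

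Then I would identify the limit $e^{2u_0}\,dv_h$ with the area measure $d\sigma$ of $(X,d)$. By the previous lemma the distance $d$ coincides with $d_{h,u_0}$, i.e. $(X,d)$ is isometric to the singular conformal surface $(M,e^{2u_0}h)$; the construction of $d_{h,u_0}$ recalled in the appendix identifies its $2$-dimensional area measure precisely with $e^{2u_0}\,dv_h$. Hence $w_i(\cdot,t)\,dv_h\to d\sigma$ weakly, as claimed.

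The main obstacle is the nonlinearity: $L^1$ convergence of $u$ does not by itself control $e^{2u}$, so without further input one could only extract subsequential weak limits. The monotone structure coming from the curvature lower bound is exactly what removes this difficulty, delivering both the everywhere pointwise limit and, through monotone convergence, the convergence of total masses needed to invoke Scheff\'e's lemma.
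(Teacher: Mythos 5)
Your proof is correct, but it takes a genuinely different route from the paper's. The paper disposes of this lemma in one line by quoting Theorem \ref{thmCVdist}: the metric initial condition means $d_{g(t)}\to d$ uniformly, and the Aleksandrov--Zalgaller continuity of the area measure under uniform convergence of distances immediately gives $dv_{g(t)}=w(\cdot,t)\,dv_h\rightharpoonup d\sigma$. What you prove is strictly stronger ($L^1$ convergence of $w(\cdot,t)$), obtained from the monotonicity of $e^{2u(x,t)}e^{-4t}$; this is essentially the content and method of the paper's \emph{next} lemma, where the same device appears as $\tilde w=e^{-2t}w$. The two logical orders are opposite: the paper deduces the identification $w_0\,dv_h=d\sigma$ of the $L^1$ limit \emph{from} the weak convergence asserted here, whereas you identify the limit via the Section \ref{sec:uniq-conf-class} statement $d=d_{h,u_0}$ together with the fact that the area measure of the singular surface $(M,d_{h,u_0})$ is $e^{2u_0}\,dv_h$. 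That last fact is true in Reshetnyak's theory (and is implicit in the paper's definition of the volume $V(u)=\int_M e^{2u}dv_h$ of a potential), but it is not recorded in the appendix --- Theorem \ref{subhdist} only gives the curvature measure --- so you should cite it explicitly rather than attribute it to ``the construction recalled in the appendix''. Your approach buys a self-contained argument that makes the subsequent $L^1$ lemma redundant, at the price of leaning on the heavier Section \ref{sec:uniq-conf-class} machinery (hence on Theorem \ref{thmCVcurv}) where the paper's one-liner needs only the ``easy direction'' Theorem \ref{thmCVdist}. A small slip inherited from the paper: the evolution is $\partial_t u=-K_g\le 1$, not $-2K_g\le 2$; the monotonicity you use is unaffected.
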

This is Theorem \ref{thmCVdist} in the appendix.

First we prove some estimates on $w_i$ :
\begin{lem}\label{estimw}
One can find $C>0$ depending on $K$, $w_1$ and $w_2$ only, such that :
\[ C e^t \leq w_i(x,t)\]
for all $x$ in $M\times (0,T]$.  
\end{lem}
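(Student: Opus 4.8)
The plan is to run a monotonicity (one-sided maximum principle) argument on $u_i=\frac12\log w_i$ rather than on $w_i$ directly, and to propagate the manifestly positive lower bound of $w_i$ at the fixed time $t=T$ backward to small $t$. The only identity I need is the one already recorded in Section \ref{sec:uniq-conf-class}, namely $\ddt{u_i}=-2K_{g_i}$; this is a direct consequence of the flow equation \eqref{eq:1} and uses no curvature bound, so it is available throughout $(0,T]$.

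First I would invoke the \emph{lower} curvature bound. Since $K_{g_i}(x,t)\geq -K$ by hypothesis, the identity gives $\ddt{u_i}\leq 2K$ everywhere on $M\times(0,T]$. Hence, for each fixed $x$, the function $t\mapsto u_i(x,t)-2Kt$ is non-increasing on $(0,T]$. Comparing its value at an arbitrary $t$ with its value at the endpoint $T$ yields $u_i(x,t)\geq u_i(x,T)+2K(t-T)$ for all $t\in(0,T]$.

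Next I would turn this into a uniform floor. Because the flow is smooth on $(0,T]$ and $M$ is compact, $w_i(\cdot,T)$ is a smooth, strictly positive function, so $m_i:=\min_x u_i(x,T)=\tfrac12\log\min_x w_i(x,T)$ is finite and depends only on $w_i$. Combining with the previous inequality and exponentiating gives $w_i(x,t)\geq e^{2m_i-4KT}\,e^{4Kt}$. A trivial estimate of the exponential factor, writing $e^{4Kt}=e^t e^{(4K-1)t}\geq e^t\min(1,e^{(4K-1)T})$, puts this in the desired form $w_i(x,t)\geq C_i e^t$ with $C_i>0$ depending only on $K$, $T$ and $m_i$, hence on $K$ and $w_i$; taking $C=\min(C_1,C_2)$ then gives a single constant that works for $i=1,2$ simultaneously.

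I do not expect a genuine obstacle: the point is precisely that the lower curvature bound is exactly what controls $\ddt{u_i}$ \emph{from above}, and an upper bound on the time-derivative of $u_i$ is what lets a positive value at a later time be carried back to earlier times. In particular, and perhaps counterintuitively, the upper curvature bound $K_{g_i}\leq \kappa/t$ of Theorem \ref{estiexist} plays no role here. The only points requiring a line of care are the strict positivity $\min_x w_i(\cdot,T)>0$ (immediate from smoothness and compactness at the fixed time $T$) and the elementary bookkeeping that converts $e^{4Kt}$ into the clean factor $e^t$ uniformly for all $K>0$.
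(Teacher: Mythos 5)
Your proposal is correct and follows essentially the same route as the paper: both integrate the identity $\partial_t w = -2K_g w$ (equivalently the ODE for $u=\tfrac12\log w$) using the lower curvature bound to control $\partial_t \log w$ from above, then propagate the strictly positive minimum of $w_i$ at a fixed later time backward to all $t\in(0,T]$ via compactness and smoothness. The only differences are cosmetic: you phrase it through $u_i$ and keep the general constant $K$ where the paper normalizes to $K\geq -1$, and you inherit the paper's own factor-of-two slip in $\partial_t u = -K_g$ (written as $-2K_g$ in Section 2), which affects only the constants.
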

\begin{proof}
  We set $w=w_1$, the proof is the same for $w_2$.
  We have :
\[ \partial_t g=\partial_t w h=-2K_g g=-2K_g w h\]
which gives $\partial_t w=-2K_g w$.
Using the geometric estimates on the curvature, we get :
\[\frac{\partial_t w}{w}\leq 2\]
Let $0<t_1<t_2<T$, compute at some fixed $x\in M$, then :
\[[\log(w(x,t))]_{t_1}^{t_2} \leq 2(t_2-t_1)\]
and :
\[\frac{w(x,t_2)}{w(x,t_1)} \leq e^{2(t_2-t_1)}\]
thus :
\[\frac{w(x,t_1)}{w(x,t_2)} \geq e^{2(t_1-t_2) }\] 
Let $t_1=t$ and $t_2>0$ be some fixed time in $(0,T)$ and use that $w(.,t_2)$ is smooth on $M$
compact, we get the required estimate.
\end{proof}
The weak convergence of $w_i(.,t)$ to $d\sigma$ is not really
pleasant to work with when dealing with uniqueness issues. In fact,
the following lemma shows that the convergence is strong in $L^1$.
\begin{lem}
  As $t$ goes to $0$, $w(.,t)$ converges in $L^1$ norm to a function $w_0$
  which satifies $w_0 dv_h=\mathcal{H}^2$.
\end{lem}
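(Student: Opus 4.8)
The plan is to reduce the statement to two ingredients: a pointwise monotone limit for $w$, which is essentially already contained in the first lemma of Section~\ref{sec:uniq-conf-class}, and the convergence of the total masses, which I will extract from the weak convergence $w(\cdot,t)\,dv_h\to d\sigma$. The delicate point is to pass from pointwise to $L^1$ convergence, and the engine for that will be the matching of the integrals against the mass of the limit measure.

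First I would record the monotonicity. Since $K_{g(t)}\geq-1$ and $\partial_t w=-2K_g w$, we have $\partial_t\log w=-2K_g\leq 2$, so $\frac{d}{dt}\bigl(\log w-2t\bigr)=-2(K_g+1)\leq 0$. Hence $t\mapsto\log w(x,t)-2t$ is nonincreasing, and as $t\searrow 0$ the quantity $w(x,t)e^{-2t}=\exp(\log w(x,t)-2t)$ increases to a pointwise limit, which I call $w_0(x)=e^{2u_0(x)}$ (consistent with the convergence $u(\cdot,t)\to u_0$ given by the first lemma, since $u=\tfrac12\log w$). In particular $w(x,t)\to w_0(x)$ pointwise, and Lemma~\ref{estimw} shows $w_0\geq C>0$.

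Next I would control the masses. Testing the weak convergence $w(\cdot,t)\,dv_h\to d\sigma$ against the constant function $1$ (admissible since $M$ is compact) gives $\vol(M,g(t))=\int_M w(\cdot,t)\,dv_h\to\sigma(M)=\mathcal{H}^2(M)<\infty$. Because $w(\cdot,t)e^{-2t}$ increases to $w_0$, the monotone convergence theorem then yields $\int_M w_0\,dv_h=\lim_{t\to 0}e^{-2t}\int_M w(\cdot,t)\,dv_h=\mathcal{H}^2(M)$, so $w_0\in L^1(M,dv_h)$ and is finite almost everywhere. Finally, to upgrade to $L^1$ convergence I would split, for fixed $t$,
\[\int_M |w(\cdot,t)-w_0|\,dv_h\leq(1-e^{-2t})\int_M w(\cdot,t)\,dv_h+\int_M\bigl(w_0-w(\cdot,t)e^{-2t}\bigr)\,dv_h,\]
where the second integrand is nonnegative by the monotonicity above. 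The first term is $(1-e^{-2t})\vol(M,g(t))\to 0$, while the second term equals $\int_M w_0\,dv_h-e^{-2t}\vol(M,g(t))\to\mathcal{H}^2(M)-\mathcal{H}^2(M)=0$; hence $w(\cdot,t)\to w_0$ in $L^1$. Strong $L^1$ convergence implies $w(\cdot,t)\,dv_h\to w_0\,dv_h$ weakly, so by uniqueness of weak limits $w_0\,dv_h=d\sigma=\mathcal{H}^2$, which is the claim.

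The main obstacle is exactly this last passage: pointwise (even monotone) convergence does not by itself give $L^1$ convergence without matching the total integrals. The crucial input is therefore the convergence of volumes $\vol(M,g(t))\to\mathcal{H}^2(M)$, read off from the weak convergence of the area measures. Once the mass of $w(\cdot,t)\,dv_h$ agrees in the limit with the mass of the monotone limit $w_0$, a Scheff\'{e}-type argument closes the gap; I prefer to make it explicit through the two-term split above, since this avoids invoking any uniform domination hypothesis, which we do not have from below only on $K_{g(t)}$.
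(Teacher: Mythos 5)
Your argument is correct and follows essentially the same route as the paper: both exploit that $e^{-2t}w(x,t)$ increases as $t\searrow 0$ (from $K_{g(t)}\geq -1$), apply the monotone convergence theorem to get $w_0\in L^1$ together with convergence of the integrals, upgrade to $L^1$ convergence via the matching of total masses, and identify $w_0\,dv_h$ with $d\sigma$ by uniqueness of weak limits. Your two-term split merely makes explicit the Scheff\'e-type step that the paper leaves implicit.
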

\begin{proof}
  Let $\tilde{w}(x,t)=e^{-2t}w(x,t)$, then :
  \[\partial_t \tilde{w}(x,t)=-2e^{-2t}w(x,t)+e^{-2t}\partial_t
  w(x,t)\]
  As in the proof of the previous lemma : $\partial_t w\leq 2 w$. So
  $\partial_t \tilde{w}\leq 0$ and $\tilde{w}(x,t)$ increases as $t$
  decreases to $0$. Let $w_0$ be the pointwise limit of
  $\tilde{w}(.,t)$ as $t$ goes to $0$. Since
  $\int_M\tilde{w}(x,t)dv_h=e^{-2t}\vol(M,g(t))$ is bounded, Lebesgue's monotone
  convergence theorem gives that $w_0$ is in $L^1$ and
  $\tilde{w}(.,t)$ (and $w(.,t)$)
  converges in $L^1$ norm to $w_0$. Since $L^1$ convergence implies
  weak convergence, $w_0 dv_h=d\sigma$.
\end{proof}

We now prove the uniqueness statement. 
\begin{prop}
  $w_1(x,t)=w_2(x,t)$ for any $x\in M$ and $t\in (0,T]$.
\end{prop}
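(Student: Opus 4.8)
The plan is to show that the one-sided $L^1$-quantity $t\mapsto\int_M (w_1-w_2)^+\,dv_h$ is non-increasing on $(0,T]$ and tends to $0$ as $t\to 0$, which forces it to vanish identically. The second ingredient is immediate: since both flows have the same metric initial condition $(X,d)$, the previous lemma produces the \emph{same} $L^1$-limit $w_0$ (because $w_{i,0}\,dv_h=d\sigma=\mathcal{H}^2$ is the area measure of $(X,d)$ in both cases). Hence
\[\int_M (w_1-w_2)^+(x,t)\,dv_h \leq \int_M |w_1-w_2|(x,t)\,dv_h \longrightarrow 0\]
as $t\to 0$, by the triangle inequality against the common limit $w_0$.

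For the monotonicity I would form the difference of the two copies of \eqref{eq:1}. The inhomogeneous term $-2K_h$ is identical for both flows and cancels, leaving, on $M\times(0,T]$,
\[\partial_t(w_1-w_2) = \Delta_h\big(\log w_1 - \log w_2\big).\]
Write $v=w_1-w_2$ and $\ph=\log w_1-\log w_2$. Since $\log$ is strictly increasing, $v$ and $\ph$ share the same sign pointwise, so $\{v>0\}=\{\ph>0\}$. The estimate I want is then
\[\frac{d}{dt}\int_M v^+\,dv_h = \int_M \operatorname{sign}^+(v)\,\partial_t v\,dv_h = \int_M \operatorname{sign}^+(\ph)\,\Delta_h\ph\,dv_h \leq 0,\]
the last inequality being Kato's inequality combined with $\int_M \Delta_h\ph^+\,dv_h=0$ on the closed surface $M$. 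To make this rigorous I would replace $\operatorname{sign}^+$ by a smooth non-decreasing approximation $p_\eps$ and compute
\[\int_M p_\eps(\ph)\,\Delta_h\ph\,dv_h = -\int_M p_\eps'(\ph)\,|\nabla\ph|^2\,dv_h \leq 0,\]
then let $\eps\to0$. Smoothing $\ph$ rather than $v$ keeps the right-hand side manifestly non-positive and avoids any comparison between $\nabla v$ and $\nabla\ph$. The lower bound $w_i\geq Ce^t$ of Lemma \ref{estimw} keeps $\ph$ bounded, so these manipulations are legitimate on each slab $M\times[\delta,T]$ with $\delta>0$.

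Combining the two facts, $f(t):=\int_M (w_1-w_2)^+\,dv_h$ is non-negative, non-increasing on $(0,T]$, and has left limit $0$ at $t=0$; a non-increasing function with vanishing left limit is $\leq 0$ throughout, so $f\equiv 0$ and $w_1\leq w_2$ on $M\times(0,T]$. Exchanging the roles of the two flows yields $w_2\leq w_1$, hence $w_1=w_2$. The main obstacle is entirely at the initial time: the two solutions are only known to share an $L^1$ datum, so the monotonicity must be proved on $(0,T]$ using the smoothness there and then propagated down to $t=0$ through $L^1$ convergence alone. Justifying the differentiation under the integral sign near $t=0$ and the regularization of the sign function for the logarithmic (fast-diffusion) nonlinearity is where the care is required; the uniform lower bound on $w_i$ is precisely what removes the degeneracy of $\Delta_h\log w_i$ and validates the computation.
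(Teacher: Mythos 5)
Your argument is correct in outline but follows a genuinely different route from the paper. The paper proves uniqueness by duality: it tests $w_2-w_1$ against the solution $\psi$ of the \emph{backward} linear equation $\partial_t\psi=-A\Delta_h\psi$, $\psi(\cdot,T')=\eta$, with $A=(\log w_2-\log w_1)/(w_2-w_1)$; the bounds $\ph(t)\le A\le C_1^{-1}$ coming from Lemma \ref{estimw} make this equation uniformly parabolic, the maximum principle gives $0\le\psi\le\sup\eta$, and letting $s\to 0$ kills the boundary term exactly because $\|w_1(\cdot,s)-w_2(\cdot,s)\|_{L^1}\to 0$. You instead run the classical B\'enilan--Crandall $L^1$-contraction argument directly on the difference, via Kato's inequality applied to $\ph=\log w_1-\log w_2$. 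Both proofs rest on the same two inputs (the common $L^1$ datum $w_0$, and positivity of the $w_i$ on compact time slabs, which controls the logarithmic nonlinearity), and both in fact yield the monotonicity of $t\mapsto\|w_1(\cdot,t)-w_2(\cdot,t)\|_{L^1}$. What your version buys is the absence of any auxiliary PDE to solve; what it costs is the one technical point you partially flag but do not resolve: the identity $\frac{d}{dt}\int_M v^+\,dv_h=\int_M\operatorname{sign}^+(v)\,\partial_t v\,dv_h$ is not automatic, since $s\mapsto s^+$ is not differentiable at $0$ and the level set $\{v=0\}$ may be large. The standard fixes are either to integrate the regularized inequality $\int p_\eps(\ph)\Delta_h\ph\,dv_h\le 0$ in time \emph{before} letting $\eps\to 0$ (comparing $\int p_\eps(\ph)\partial_t v$ with the derivative of a convex regularization of $\int v^+$), or to note that $\nabla\ph$ and hence $\Delta_h\ph=\partial_t v$ vanish a.e.\ on $\{\ph=0\}=\{v=0\}$, so the kink contributes nothing. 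Also be careful that the regularization must be applied to $\ph$ and not to $v$, exactly as you observe, since $\nabla v\cdot\nabla\ph$ has no sign; with these details filled in, your proof is complete and equivalent in strength to the paper's.
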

\begin{proof}
  We will prove that for any smooth nonnegative function $\eta$ on
  $M$ and any $T'\in (0,T]$ :
  \[\int_M (w_1(x,T')-w_2(x,T'))\eta(x) dv_h(x)=0\]
  Let $\psi$ be a smooth function on $M\times (0,T']$ and $0<s<T'$, then :
  \begin{align*}
    &\int_M (w_2(x,T')-w_1(x,T'))\psi(x,t)dv_h-\int_M (w_2(x,s)-w_1(x,s))\psi(x,s)dv_h=\\
    &\int_s^{T'} \int_M (w_2(x,\tau)-w_1(x,\tau))(A(x,\tau)\Delta_h
    \psi(x,\tau)+\partial_t\psi(x,\tau))dv_h d\tau 
  \end{align*}
  where $A(x,\tau)=\frac{\log(w_2(x,\tau))-\log(w_1(x,\tau))}{w_2(x,\tau)-w_1(x,\tau)}$.
  Since $w_1$ and $w_2$ are smooth on $M\times (0,T]$, $A$ is smooth
  too. Moreover, by the mean value theorem and lemma \ref{estimw}, we
  have, for $(x,t)\in M\times (0,T]$ :
  \[\ph(t)\leq A(x,t)\leq\frac{1}{C_1}\]
  where $\ph$ is the positive continuous function defined by:
  \[\ph(t)=\inf_{x\in M}\min\left
    (\frac{1}{w_1(x,t)},\frac{1}{w_2(x,t)}\right )>0.\]

  We now choose $\psi$ to be the solution of the following backwark heat
  equation :
  \begin{equation*}
    \begin{cases}
      \ddt{\psi}(x,t)=-A(x,t)\Delta_h\psi(x,t)\\
      \psi(x,T')=\eta(x)
    \end{cases}
  \end{equation*}
  Thanks to the properties of $A$, $\psi$ is smooth on $M\times (0,T']$
  and the maximum principle shows that : $0\leq \psi(x,t)\leq \sup_{x\in
  M}\eta(x)$. we get :
\[ \int_M (w_2(x,T')-w_1(x,T'))\eta(x)dv_h=\int_M
(w_2(x,s)-w_1(x,s))\psi(x,s)dv_h\]
We now let $s$ go to $0$, since $w_1(.,s)-w_2(.,s)$ goes to $0$ in
$L^1$ norm and $\psi(x,s)$ is bounded, the right hand side of the
previous equality goes to $0$ and  :
\[\int_M (w_2(x,T')-w_1(x,T'))\eta(x)dv_h=0\]
Since this equality is true for any $\eta$ and any $T'>0$, we have
that $w_1$ and $w_2$ are equal almost everywhere, since these
functions are smooth, we get equality everywhere.
\end{proof}
\appendix

\section{Facts from the theory of Alexandrov surfaces}
\label{sec:facts-from-theory}

This appendix gathers the results from the theory of Alexandrov
surfaces with bounded integral curvature or curvature bounded
from below that have been used in the paper. All these results can be
found 
in the works of Alexandrov and Reshetnyak (see \cite{MR2193913},
\cite{MR0216434} and \cite{MR1263964}). A survey in a more
modern language can be found in \cite{2009arXiv0906.3407T}.

We use two notions of surfaces with special curvature properties in
this work. Our main objects of interest are compact surfaces with
curvature bounded from below by $-k$, which are surfaces with an intrinsic
metric $(X,d)$ whose geodesic triangles are ``fatter'' than those in
the complete simply-connected surface of constant curvature $-k$ (see
\cite{MR1835418}, chapter 4 and 10).

A wider class of surfaces is the class of surfaces with bounded integral
curvature in the sense of Alexandrov. The definition we give in the
next few lines
stays informal, precise definition can be found in \cite{MR0216434}
and \cite{MR1263964}. The excess of a geodesic triangle $T$ in an
intrinsic is defined by
$e(T)=(\alpha+\beta+\gamma)-\pi$ where $\alpha$, $\beta$ and $\gamma$
are the upper angles of $T$. A compact surface with an intrinsic
metric $(X,d)$ is said to have bounded integral curvature if there is
a constant $C$ such that for any finite family $(T_i)$ of disjoint
``nice enough'' triangles, $\sum_i |e(T_i)|\leq C$.

Compact Alexandrov surfaces with curvature bounded from below are
compact Alexandrov surfaces with bounded integral curvature, a proof
of this fact can be found in \cite{MR1643359}. Alexandrov surfaces
with bounded integral curvature have well defined notions of area and
curvature, which are measures on the surface (signed measure for the
curvature). In the case of compact smooth surfaces $(M,g)$, these
measures coincide with the usual notions of volume form $dv_g$ and
curvature measure $K_gdv_g$, see \cite{MR0216434}, chapters 5 and 8.

First we need a theorem on the approximation of compact Alexandrov
surfaces with curvature bounded from below by smooth surfaces :

\begin{thm}\label{approxalex}
  For any compact Alexandrov surface with curvature bounded from below
  by $k$ $(X,d)$, there exist a sequence of smooth compact Riemannian
  surfaces $(M_i,g_i)_{i\in\mathbb{N}}$ satisfying :
  \begin{itemize}
  \item $K_{g_i}\geq k$
  \item $\diam(M_i,g_i)\leq D$
  \item $\frac{V}{2}\leq \vol(M_i,g_i)\leq V$
  \end{itemize}
  which Gromov-Hausdorff converges to $(X,d)$.
\end{thm}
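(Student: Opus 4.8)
The plan is to realise $(X,d)$ conformally and reduce the statement to a smoothing problem for the conformal potential, so that the stability theory already quoted in the appendix can do the rest. Since a compact Alexandrov surface with curvature $\geq k$ has bounded integral curvature, Reshetnyak's representation applies: one can fix a smooth closed surface $M$ (homeomorphic to $X$), a smooth background metric $h$ of constant curvature, and a potential $u_0\in Pot(M,h)$ so that $(X,d)$ is isometric to $(M,d_{h,u_0})$. With the sign convention of the paper the curvature measure is $\omega=K_h\,dv_h-\Delta_h u_0$ and the area measure is $d\sigma_0=e^{2u_0}\,dv_h$, and the hypothesis ``curvature $\geq k$'' is precisely the measure inequality $\omega\geq k\,d\sigma_0$. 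The goal becomes the construction of smooth functions $u_i$ on $M$ such that $g_i=e^{2u_i}h$ satisfies $K_{g_i}\geq k$, with curvature measures converging weakly to $\omega$ and areas converging to $\sigma_0$.

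For the construction I would split $\omega=\omega^{ac}+\omega^{s}$ into its absolutely continuous and singular parts relative to $dv_h$. The bound $\omega\geq k\,d\sigma_0$ forces the negative part of $\omega$ to be absolutely continuous and bounded below by $k e^{2u_0}$, so that $\omega^{s}\geq 0$; this singular part is carried by the (nonnegative) atoms at the cone points of $(X,d)$, together with a possible continuous singular positive component. I would then prescribe a smooth curvature density $f_i\,dv_h$ by mollifying the density of $\omega^{ac}$ and by replacing $\omega^{s}$ by smooth nonnegative bumps concentrating on shrinking disks around the cone points, all arranged so that $\int_M f_i\,dv_h=2\pi\chi(M)=\omega(X)$ in order to respect Gauss--Bonnet. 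Solving the linear equation $\Delta_h u_i=K_h-f_i$ (solvable because the right-hand side has vanishing mean) and fixing the additive constant to match the area produces a genuinely smooth metric $g_i=e^{2u_i}h$ whose curvature measure is exactly $f_i\,dv_h$; note that the ``caps'' at the cone points appear automatically as smooth positive-curvature regions, so no explicit gluing is needed.

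The remaining points are comparatively routine. By construction $f_i\,dv_h\to\omega$ weakly with the positive and negative parts controlled, and the areas $e^{2u_i}\,dv_h\to e^{2u_0}\,dv_h$, so the hypotheses of Theorem \ref{thmCVcurv} are met and the distances $d_{g_i}=d_{h,u_i}$ converge uniformly, hence in the Gromov--Hausdorff sense, to $d_{h,u_0}=d$. Uniform convergence of the distances gives $\diam(M,g_i)\to\diam(X,d)$, while $\vol(M,g_i)=\int_M e^{2u_i}\,dv_h\to\vol(X,d)$. Choosing $D$ slightly larger than $\diam(X,d)$ and, say, $V=2\vol(X,d)$, all three estimates in the statement hold for $i$ large, and passing to that tail gives the asserted sequence.

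The main obstacle is preserving the \emph{sharp} lower bound $K_{g_i}\geq k$, rather than merely $K_{g_i}\geq k-\eps_i$, through the smoothing, because $K_{g_i}=f_i e^{-2u_i}$ couples the prescribed curvature density to the conformal factor $u_i$, which itself depends on $f_i$ through the PDE. Where $f_i\geq 0$ — in particular on the bumps replacing the nonnegative singular part — the inequality is automatic since $k=-1<0$. The delicate region is where the absolutely continuous curvature is genuinely negative: there one needs $f_i\geq k e^{2u_i}$, and although the undeformed density satisfies $\rho\geq k e^{2u_0}$, the factor $e^{2u_i}$ drifts as $f_i$ is perturbed. Away from the cone points $u_0$ is continuous (it blows up only at the positive atoms, where the caps already make the bound free), so on the bulk $u_i-u_0$ is uniformly small; I would mollify the density with a small, $i$-dependent margin and close the estimate by a continuity/fixed-point argument in this small perturbation. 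This is the one step requiring real care; the representation, the concentration of the singular part, and the convergence of distances, volumes and diameters are all supplied by the cited structure and stability theory of Alexandrov surfaces.
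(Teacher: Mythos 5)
Your route is genuinely different from the paper's. The paper disposes of this theorem in two sentences: approximate $(X,d)$ by polyhedra whose faces are geodesic triangles in the model space of curvature $k$ (Alexandrov's approximation theorem), then smooth the polyhedra; the lower curvature bound is essentially built into the approximants, since the faces have curvature exactly $k$ and the vertices carry nonnegative curvature concentrations. Your conformal route --- represent $(X,d)$ as $(M,d_{h,u_0})$ via Reshetnyak, mollify the curvature measure, solve $\Delta_h u_i=K_h-f_i$, and invoke Theorem \ref{thmCVcurv} to get uniform convergence of the distances --- is attractive because it reuses exactly the machinery already assembled in the appendix and keeps all approximants in one conformal class. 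But it rests on an input the paper never states: that Alexandrov curvature $\geq k$ implies the measure inequality $\omega\geq k\,d\sigma_0$. This is not ``precisely'' the hypothesis, as you write; it is a theorem (in the circle of ideas of \cite{MR1643359}), although only this implication is needed, since for the smooth $g_i$ the pointwise bound $K_{g_i}\geq k$ yields Alexandrov curvature $\geq k$ by Toponogov.

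The genuine gap is the step you yourself flag: the pointwise bound $K_{g_i}=f_ie^{-2u_i}\geq k$ on the set where $f_i<0$. Your assertion that ``on the bulk $u_i-u_0$ is uniformly small'' is the crux and is unjustified at the regularity you actually have: $u_0$ is a priori only an $L^1$ difference of subharmonic functions, and $L^1$ convergence of $u_i$ to $u_0$ gives no pointwise control of the factor $e^{2(u_0-u_i)}$ that the inequality $f_i\geq k e^{2u_i}$ requires. What you need is locally uniform convergence of the potentials away from the atoms of $\omega^+$; such statements exist in Reshetnyak's theory (under uniform smallness of $\mu_i^+$ on small disks), but they must be invoked and their hypotheses checked, and ``a continuity/fixed-point argument'' is a placeholder, not an argument. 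Note also that the sharp constant is a red herring when $k\neq 0$: if you obtain $K_{g_i}\geq k-\varepsilon_i$ with $\varepsilon_i\to 0$, rescaling $g_i$ by $c_i=(k-\varepsilon_i)/k\to 1$ restores $K\geq k$ without disturbing the Gromov--Hausdorff convergence or the diameter and volume bounds, while for $k=0$ mollification preserves $f_i\geq 0$ and the issue disappears. So the correct repair is: (i) establish locally uniform convergence $u_i\to u_0$ off the cone points, (ii) settle for the bound $k-\varepsilon_i$, (iii) rescale. As written, the proposal leaves its only hard step open.
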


This theorem doesn't seem to have been explicitely stated before. When
$k=0$, it follows from the theorem of Alexandrov on the approximation
of convex surfaces by convex polyhedra, which is
proved in chapter 7, section 6 of \cite{MR2193913}, and the fact that convex
polyhedra can be approximated by smooth convex surfaces. When the curvature
bound is not $0$, one has to approximate the surface by polyhedra
whose faces are geodesic triangle in a space form of curvature $k$.

The next theorem shows that the curvature measure and the area measure
depend continuously
on the distance, this is Theorem 6, p. 240 and Theorem 9 p. 269 in \cite{MR0216434}.
\begin{thm}\label{thmCVdist}
  Le $(d_i)_{i\in\mathbb{N}}$ and $d$ be distances on a compact surface $M$ such that :
  \begin{itemize}
  \item $(M,d)$ and each of the $(M,d_i)$ are Alexandrov surfaces of
    bounded integral curvature.
  \item as functions on $M\times M$, the distances $d_i$ uniformly
    converges to $d$.
  \end{itemize}
  Then the curvature measures $d\omega_i$ of $(M,d_i)$ weakly
  converges to the curvature measure $d\omega$ of $(M,d)$, that is,
  for any continuous $\ph$ function on $M$ :
  \[\int_M \ph d\omega_i\xrightarrow{i\to\infty} \int_M \ph d\omega.\]
  Moreover, the area measure $d\sigma_i$ of $d_i$ weakly converge to
  the area measure $d\sigma$ of $d$.
\end{thm}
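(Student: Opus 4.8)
The plan is to reduce the statement to the analytic representation of surfaces of bounded integral curvature and then prove convergence at the level of conformal potentials. By Reshetnyak's representation theorem, around each point of $M$ one may choose isothermal coordinates $(x,y)$ in which the metric of a surface of bounded integral curvature takes the form $e^{2u}(dx^2+dy^2)$, where $u=p-q$ is a difference of two subharmonic functions; equivalently the distributional Laplacian $\Delta u$ is a signed Radon measure, and one has the identities $d\omega=-\Delta u$ and $d\sigma=e^{2u}\,dx\,dy$. Since $M$ is compact, I would fix finitely many such charts together with a subordinate partition of unity, so that it suffices to prove, in each fixed chart, that $-\Delta u_i$ converges weakly to $-\Delta u$ and that $e^{2u_i}\,dx\,dy$ converges weakly to $e^{2u}\,dx\,dy$, where $u_i$ (resp. $u$) is the potential of $d_i$ (resp. $d$) in that chart.

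The heart of the argument is to upgrade the uniform convergence $d_i\to d$ to convergence of the potentials $u_i\to u$. In a chart the distance is an infimum of the length functional $\gamma\mapsto\int e^{u}\,|\dot\gamma|$, so uniform two-sided control of $d_i$ by $d$ forces integrated control of the conformal factors $e^{u_i}$. First I would extract uniform local bounds: a lower bound on $u_i$ from the fact that short geodesics cannot collapse, and an $L^1_{\mathrm{loc}}$ bound from the uniform bound on the areas $\vol(M,d_i)$. Combined with a uniform bound on the total variation of the curvature measures $d\omega_i$ --- which in the setting of the paper is supplied by the two-sided curvature bound and Gauss--Bonnet, but in general must be assumed or derived --- this places the $u_i$ in a relatively compact family of $\delta$-subharmonic functions. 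Hence a subsequence converges in $L^1_{\mathrm{loc}}$ to some $u_\infty$, with $\Delta u_i\rightharpoonup\Delta u_\infty$ as measures and $e^{2u_i}\to e^{2u_\infty}$ almost everywhere.

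It then remains to identify the limit: I would show $u_\infty=u$ by checking that the metric $e^{2u_\infty}(dx^2+dy^2)$ reconstructs exactly the limiting distance $d$, and invoking the uniqueness part of the representation theorem (the potential is determined by the distance up to the harmonic ambiguity fixed by the conformal coordinates). Once $u_\infty=u$ is established, the full sequence converges, and we read off both conclusions: weak convergence of $-\Delta u_i$ to $-\Delta u$ is exactly weak convergence of the curvature measures, and almost-everywhere convergence of $e^{2u_i}$ together with the $L^1$ mass control (Lebesgue's theorem) yields weak convergence of the area measures against continuous $\ph$. Reassembling the charts with the partition of unity gives the global statement.

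The step I expect to be the main obstacle is the passage from uniform convergence of the distance functions to convergence of the potentials together with their Laplacians. Uniform convergence of $d_i$ is a statement about infima of path-length functionals and does not by itself control the second-order object $\Delta u$; one must exploit the quantitative estimates of Reshetnyak's theory relating the potential to the metric through a Green-potential representation of $u$ in terms of $\omega$, and one must rule out concentration of curvature mass into atoms of size $\geq 2\pi$, which would allow the metric topology to degenerate and break the identification of the limit. This is precisely the content encoded in the cited theorems of \cite{MR0216434}, on which I would ultimately rely.
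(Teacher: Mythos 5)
First, a point of comparison: the paper does not prove this statement at all --- it is quoted as a black box from Aleksandrov--Zalgaller (\cite{MR0216434}, Theorem 6 p.~240 and Theorem 9 p.~269) --- so there is no internal proof to match. Your outline follows instead the analytic (Reshetnyak) representation of surfaces of bounded integral curvature via conformal potentials, which is a legitimate alternative route in principle, but as written it has genuine gaps rather than being a complete argument.

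The main gap is the compactness step. Relative compactness of the potentials $u_i$ as $\delta$-subharmonic functions, and weak sequential compactness of the measures $\Delta u_i$, require a uniform bound on the total variations $|\omega_i|(M)$. This is not among the hypotheses of the theorem --- each $(M,d_i)$ is only assumed to have bounded integral curvature individually --- and you do not derive it from the uniform convergence $d_i\to d$; you only remark that it ``must be assumed or derived''. In the paper's applications such a bound is available (curvature bounded below by $-1$ plus Gauss--Bonnet), but for the theorem as stated this is where the synthetic proof in \cite{MR0216434} does real work, controlling excesses of triangles under uniform convergence of the metrics. Second, your identification $u_\infty=u$ passes from convergence of potentials back to convergence of the induced distances, i.e.\ it invokes the converse result (Theorem \ref{thmCVcurv}), whose hypotheses include the non-concentration condition $d\mu^+(\{x\})<2\pi$ for the limit measure; you flag the danger that atoms of the $\omega_i^+$ merge to mass $\geq 2\pi$ in the limit but do not rule it out. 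Third, for the area measures, almost-everywhere convergence of $e^{2u_i}$ together with a uniform $L^1$ bound does not imply weak convergence ($L^1$ mass can concentrate); one needs in addition convergence of the total areas and Scheff\'e's lemma, or uniform integrability, neither of which is established. So the proposal is a reasonable sketch of a strategy, but it is not a proof, and in the context of this paper the intended justification is simply the citation.
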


Our aim now is to present a partial converse of the previous
theorem. In the sequel, $h$ is a fixed smooth Riemannian metric on
$M$. We consider the space $Pot(M,h)$ of $L^1$ functions $u$ on $M$ whose
distributional laplacian with respect to $h$ is a signed measure $d\mu$
on $M$, we say that $u$ is the potential of $d\mu$. Such a $u$ is the
difference of two subharmonic functions and has a representative which
is well defined outside a set of Hausdorff dimension $0$ in $M$. 

The volume of $u$
is defined by $V(u)=\int_M e^{2u}dv_h$. Given a zero mass signed
measure $d\mu$ and $V>0$, $d\mu$ has a unique potential $u_{\mu,V}$ of
volume $V$. We will denote by
$d\mu=d\mu^+-d\mu^-$ the Jordan decomposition of $\mu$. Reshetnyak has
studied the non-smooth Riemannian metric $e^{2u}h$. We have
(\cite{MR1263964} Theorem 7.1.1, \cite{2009arXiv0906.3407T}
Proposition 5.3) :
\begin{thm}\label{subhdist}
  Let $u\in Pot(M,h)$ be a potential of $d\mu$. Assume that
  $d\mu^+(\{x\})<2\pi$ for any $x\in M$. Define :
  \[d_{h,u}(x,y)=\inf_{\gamma\in\Gamma(x,y)}\int_0^1
  e^{u(\gamma(\tau))}|\dot{\gamma}(\tau)|_hd\tau\]
  where $\Gamma(x,y)$ is the space of $C^1$ paths $\gamma$ from
  $[0,1]$ to $M$ with $\gamma(0)=x$ and $\gamma(1)=y$. Then $d_{h,u}$
  is a distance on $M$ such that $(M,d_{h,u})$ has bounded integral
  curvature. The curvature measure of this surface is given by :
  \[d\omega=K_h dv_h +d\mu.\]
\end{thm}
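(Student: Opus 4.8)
The plan is to deduce Theorem \ref{subhdist} from the classical conformal change of Gauss curvature for smooth metrics by a careful approximation of $u$ by smooth potentials. Throughout I would write $d\mu=d\mu^+-d\mu^-$ for the Jordan decomposition and exploit the local structure of a potential: on a coordinate disk $u$ differs from the logarithmic potential $\frac{1}{2\pi}\int\log|\cdot-y|\,d\mu(y)$ by a harmonic (hence smooth) function, so all singular behaviour of $u$ comes from the atoms of $\mu$. A positive atom of mass $a$ produces a conical singularity of the metric $e^{2u}h$ of cone angle $2\pi-a$, near which $e^{u}$ blows up like $\rho^{-a/2\pi}$ in a conformal coordinate $\rho$; the radial length $\int_0\rho^{-a/2\pi}\,d\rho$ and the local area $\int_0\rho^{1-a/\pi}\,d\rho$ converge precisely when $a<2\pi$. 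Thus the hypothesis $d\mu^+(\{x\})<2\pi$ is exactly the requirement that every cone angle be strictly positive, keeping each apex at finite distance and of finite area. Negative atoms give cone angles larger than $2\pi$, with $e^{u}\to0$, and cause no trouble.

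First I would check that $d_{h,u}$ is a genuine distance inducing the manifold topology. Symmetry and the triangle inequality are immediate, since $d_{h,u}$ is the infimum of a reversible, concatenable length functional. Finiteness of $d_{h,u}(x,y)$ follows from the local integrability just discussed together with a Fubini argument showing that generic $C^1$ paths have finite $e^{u}$-length. Nondegeneracy and the identification of the topology come from comparing $d_{h,u}$ with $d_h$ on compact pieces away from the finitely many large atoms of the finite measure $\mu$, together with the explicit cone model at each atom; the strict inequality $a<2\pi$ guarantees that distinct points are never identified and that small $d_{h,u}$-balls are genuine neighbourhoods.

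The heart of the argument is bounded integral curvature and the curvature formula, which I would obtain by smooth approximation. Choose $\mu_n=f_n\,dv_h$ with $f_n$ smooth, $\mu_n\rightharpoonup\mu$ weakly and $\|\mu_n\|_{TV}\le\|\mu\|_{TV}$, by mollifying $\mu^+$ and $\mu^-$ separately, and let $u_n$ be the potential of $\mu_n$ with the same volume normalisation, so that $u_n\to u$ in $L^1$ and $g_n=e^{2u_n}h$ is a smooth Riemannian metric. The classical conformal change formula for the Gauss curvature, combined with the sign convention fixing $d\mu$ as the distributional Laplacian in the definition of $Pot(M,h)$, gives at the level of measures
\[d\omega_{g_n}=K_h\,dv_h+d\mu_n,\]
whence $\int_M|K_{g_n}|\,dv_{g_n}=\|K_h\,dv_h+\mu_n\|_{TV}\le C$ uniformly in $n$, so the $(M,g_n)$ are smooth surfaces of uniformly bounded integral curvature. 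I would then show that $d_{g_n}\to d_{h,u}$ uniformly, estimating path lengths from the convergence $e^{u_n}\to e^{u}$ and the uniform control of the approximating cone angles. By the closedness of the class of surfaces of bounded integral curvature under uniform convergence of distances with total-variation bounds — a fundamental result of Reshetnyak (\cite{MR1263964}, \cite{2009arXiv0906.3407T}) — the limit $(M,d_{h,u})$ has bounded integral curvature, and Theorem \ref{thmCVdist} then yields $d\omega_{g_n}\rightharpoonup d\omega$ for its curvature measure; since also $d\omega_{g_n}=K_h\,dv_h+\mu_n\rightharpoonup K_h\,dv_h+\mu$, this gives the formula $d\omega=K_h\,dv_h+d\mu$.

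The main obstacle is the behaviour at the atoms during this limiting process. Mollifying a positive atom of mass close to $2\pi$ spreads it into a sharply concentrated smooth bump, and one must ensure that the approximating cone angles stay uniformly bounded away from zero, so that the metrics $e^{2u_n}h$ do not degenerate and no curvature mass escapes or reconcentrates in a way that would violate $d\mu^+(\{x\})<2\pi$ in the limit. Making the uniform convergence $d_{g_n}\to d_{h,u}$ rigorous near these points, where $e^{u_n}$ blows up, and ruling out loss of curvature mass in the weak limit is exactly where the strict inequality is used and where the analytic care is required; this is the delicate part that the cited works of Reshetnyak and Troyanov carry out in full.
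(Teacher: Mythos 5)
The paper offers no proof of this statement: Theorem \ref{subhdist} is quoted in the appendix directly from Reshetnyak (\cite{MR1263964}, Theorem 7.1.1) and Troyanov (\cite{2009arXiv0906.3407T}, Proposition 5.3), so there is no internal argument to compare yours against. Your sketch does follow the architecture by which the result is actually established in those references --- mollify $\mu^{\pm}$, use the classical conformal change formula $d\omega_{g_n}=K_h\,dv_h+d\mu_n$ for the smooth approximants, pass to the limit in the distances, invoke stability of the bounded-integral-curvature class, and identify the limit curvature measure by weak convergence --- and your local cone analysis at an atom (length $\int_0\rho^{-a/2\pi}d\rho$, area $\int_0\rho^{1-a/\pi}d\rho$, both finite exactly when $a<2\pi$) correctly isolates where the hypothesis $d\mu^+(\{x\})<2\pi$ enters.

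However, as a proof the proposal has a genuine gap at its central step, and you acknowledge as much. The uniform convergence $d_{g_n}\to d_{h,u}$ cannot be obtained by ``estimating path lengths from the convergence $e^{u_n}\to e^u$'': the convergence $u_n\to u$ is only in $L^1$ (or pointwise quasi-everywhere), which gives no control whatsoever on the $e^{u_n}$-length of an individual $C^1$ curve, a set of measure zero. The required estimates --- Reshetnyak's bounds on lengths of curves and on the distortion of the conformal factor in terms of the mass of $\mu^+$ in small disks, where the threshold $2\pi$ enters through an explicit harmonic-measure computation --- are precisely the content of the theorem, and deferring them to ``the cited works of Reshetnyak and Troyanov'' makes the argument a restatement rather than a proof. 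Note also that this step is essentially Theorem \ref{thmCVcurv}, which in Reshetnyak's development is proved \emph{after} and \emph{using} Theorem 7.1.1, so the reduction is circular if one stays within the cited framework. A second, smaller gap: nondegeneracy of $d_{h,u}$ is not settled by comparing with $d_h$ ``away from the finitely many large atoms,'' since a potential $u$ is only a difference of subharmonic functions and may be $+\infty$ or $-\infty$ on sets of capacity zero that are not atoms of $\mu$; ruling out that two distinct points are joined by curves of arbitrarily small $e^u$-length again requires the quantitative potential-theoretic lemmas you have not supplied.
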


We are now ready to state the converse of Theorem
\ref{thmCVdist}. This is Theorem 7.3.1 in \cite{MR1263964}, see also
\cite{2009arXiv0906.3407T}, Theorem 6.2.
\begin{thm}\label{thmCVcurv}
  Let $(M,h)$ be a smooth Riemannian surface and $(d\mu_i^+)_{i\in\mathbb{N}}$
  $(d\mu_i^-)_{i\in\mathbb{N}}$ be two sequences of (nonnegative) measures which weakly
  converge to $d\mu^+$ and $d\mu^-$ and such that $d\mu_i(M)$ and
  $d\mu_i(M)$ are equal and bounded independently of $i$. 

  Let $V_i$ be a sequence of
  positive numbers converging to $V$. Let $u_i$ be the potential of
  $d\mu_i=d\mu_i^+-d\mu_i^-$ of volume $V_i$ and $u$ be the potential of
  $d\mu=d\mu^+-d\mu^-$ of volume $V$. 

  Assume that $d\mu(\{x\})<2\pi$
  for all $x\in M$. Then the distances $d_{h,u_i}$ uniformly converge
  as $i$ goes to infinity to the distance $d_{h,u}$.

\end{thm}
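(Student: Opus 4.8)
The plan is to factor the problem into two parts: first, convergence of the conformal factors $e^{u_i}\to e^{u}$ in a topology strong enough to control lengths; second, a length-space argument turning this into uniform convergence of the associated distances. To begin, I would represent the potentials by means of the Green function $G$ of $\Delta_h$ on the closed surface $(M,h)$. Since the signed measures $d\mu_i$ and $d\mu$ have zero total mass, solvability gives
\[ u_i(x)=\int_M G(x,y)\,d\mu_i(y)+c_i,\qquad u(x)=\int_M G(x,y)\,d\mu(y)+c, \]
where the constants $c_i$ and $c$ are fixed by the volume normalisations $\int_M e^{2u_i}\,dv_h=V_i$ and $\int_M e^{2u}\,dv_h=V$. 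As $G(x,y)\sim-\frac{1}{2\pi}\log d_h(x,y)$ near the diagonal and is smooth off it, a positive atom of $d\mu_i$ makes the potential tend to $+\infty$ (the small-cone-angle case) and a negative atom makes it tend to $-\infty$; this isolates the one mechanism that can break convergence, the concentration of positive mass at a point.

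The analytic core is to prove that $e^{u_i}\to e^{u}$ in $L^p(M,dv_h)$ for some $p>2$. Off the atoms of the limit, weak convergence of $d\mu_i^{\pm}$ and smoothness of $G$ away from the diagonal yield convergence of the Green potentials, hence of $e^{u_i}$, locally uniformly. The delicate behaviour is near a point $x_0$ carrying an atom of mass $a=d\mu(\{x_0\})>0$: there $u\sim-\frac{a}{2\pi}\log d_h(\cdot,x_0)$, so $e^{u}\sim d_h(\cdot,x_0)^{-a/2\pi}$ and $e^{pu}$ is $h$-integrable exactly when $p<4\pi/a$. The hypothesis $d\mu(\{x\})<2\pi$ forces $4\pi/a>2$, which is precisely what makes some exponent $p>2$ admissible. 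Upgrading this to a bound that is uniform in the sequence requires controlling $d\mu_i^+$ on small balls around $x_0$; granting such a uniform sub-$2\pi$ bound, a Moser--Trudinger/Troyanov type estimate (in the spirit of \cite{2009arXiv0906.3407T}) gives a uniform $L^p$ bound on $e^{u_i}$, and together with the pointwise convergence off the atoms a Vitali equi-integrability argument yields the claimed $L^p$ convergence; the volume constraints then pin down $c_i\to c$.

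Granted $e^{u_i}\to e^u$ in $L^p$ with $p>2$, I would obtain uniform convergence of the distances through matching one-sided bounds. For $\limsup_i d_{h,u_i}\le d_{h,u}$ it suffices to test with a fixed almost-minimising $C^1$ path $\gamma$ for $d_{h,u}$ joining $x$ to $y$: a Fubini argument over a family of parallel paths produces one along which $\int_\gamma e^{u_i}|\dot\gamma|_h\to\int_\gamma e^{u}|\dot\gamma|_h$. The reverse inequality is the lower semicontinuity of conformal length: taking $d_{h,u_i}$-minimisers $\gamma_i$, one reparametrises by $h$-arclength, extracts a uniformly convergent subsequence, and passes to the limit in $\int e^{u_i}|\dot\gamma_i|_h$ using the $L^p$ convergence of the weights. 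Finally, the same uniform $L^p$ bound, applied via H\"older along short segments, furnishes a common modulus of continuity for all the $d_{h,u_i}(x,\cdot)$; this equicontinuity, together with compactness of $M\times M$, promotes the pointwise convergence just obtained to uniform convergence.

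I expect the main obstacle to be the uniform control of the measures near their atoms, on both sides. On the positive side, weak convergence alone allows $d\mu_i^+$ either to concentrate towards $x_0$ or to leak away from it, so extracting from $d\mu(\{x_0\})<2\pi$ a bound $d\mu_i^+(B(x_0,r))\le 2\pi-\delta$ holding uniformly for large $i$ and small $r$ is the crux; it must be teased out of the portmanteau inequalities together with the equality and uniform boundedness of the total masses $d\mu_i(M)$. On the negative side, the weights $e^{u_i}$ may vanish near atoms of $d\mu_i^-$, so in the lower-semicontinuity step one must rule out that the minimisers $\gamma_i$ accumulate $h$-length in those thin regions without contributing to the limiting conformal length. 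Once these uniform estimates near the atoms are secured the remaining steps are routine, and the hypothesis is sharp: an atom of mass $2\pi$ in the limit would create a cusp, collapsing distances to that point and genuinely destroying uniform convergence.
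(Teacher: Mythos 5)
First, a point of reference: the paper does not prove this statement at all --- it is quoted verbatim from Reshetnyak (\cite{MR1263964}, Theorem 7.3.1; see also \cite{2009arXiv0906.3407T}, Theorem 6.2), so there is no in-paper proof to compare against. Judged on its own terms, your proposal is a plausible high-level plan whose local analysis is correct (the Green's function representation, the computation showing that $e^{pu}$ is integrable near an atom of mass $a$ exactly when $p<4\pi/a$, hence the role of the $2\pi$ threshold), but the two places you yourself flag as ``the crux'' are of very unequal difficulty, and the one you treat as routine is where the argument actually breaks. The uniform sub-$2\pi$ bound $d\mu_i^+(\bar B(x,r))\le 2\pi-\delta$ \emph{is} extractable by standard means: $r\mapsto\sup_x d\mu^+(\bar B(x,r))$ decreases to $\max_x d\mu^+(\{x\})<2\pi$, and the portmanteau upper inequality on closed balls together with a finite covering transfers this to the $d\mu_i^+$ for large $i$. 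That part is fine.

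The genuine gap is the reduction of distance convergence to $L^p(M,dv_h)$ convergence of $e^{u_i}$. An $L^p$ bound with respect to the two-dimensional measure $dv_h$ gives no control whatsoever over integrals of $e^{u_i}$ along a fixed curve, which is a $dv_h$-null set. Your Fubini-over-parallel-paths device rescues the \emph{upper} bound $\limsup_i d_{h,u_i}\le d_{h,u}$ and the equicontinuity (since the distance is an infimum, it suffices to find one good nearby path), but it does nothing for the \emph{lower} bound $\liminf_i d_{h,u_i}\ge d_{h,u}$: there you must bound from below the $e^{u_i}$-length of near-minimisers $\gamma_i$ that you do not get to choose, and these may deliberately thread through regions where $\mu_i^-$ concentrates and $e^{u_i}$ is nearly zero --- regions that weak convergence of $\mu_i^-$ cannot exclude even when $\mu^-$ has no atoms. (Two further unaddressed points in the same step: near-minimisers need not exist as $C^1$ curves, and their $h$-arclength parametrisations need not be uniformly bounded, so the Arzel\`a--Ascoli extraction is not available as stated.) Reshetnyak's actual proof replaces the $L^p$ bound by genuinely one-dimensional estimates: writing $u_i$ as a difference of potentials of the nonnegative measures $\mu_i^\pm$ in isothermal coordinates, he proves two-sided length estimates for curves directly in terms of the measures (the ``$<2\pi$ on small balls'' hypothesis entering through an estimate for the length of a segment under $e^{u}$ with $u$ a sub-$2\pi$ superharmonic potential), and it is these curve-level estimates, not area-level integrability, that yield both semicontinuity directions. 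Without a substitute for them, the second half of your argument does not close.
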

\begin{rem}\label{remnocusp}
  In the case of surfaces with curvature bounded from below, the
  condition $d\mu(\{x\})<2\pi$ is automatically fulfilled. In fact, it
  follows from the discussion on ``complete angles at a point'' in
  \cite{MR0216434} (Chapter 2 Section 5 and Chapter 4 Section 4) that
  if the curvature $d\omega(\{x\})$ of a point $x$ in $(M,d)$ is
  $2\pi$, then any two shortest paths $\gamma_1$ and $\gamma_2$ emanating of
  $x$ will make a $0$ angle at $x$. Since when the curvature is
  bounded from below this angle has to be greater than the comparison
  triangle, this is impossible.
\end{rem}

Then next theorem, due to Huber says that the distance $d_{h,u}$
determines the conformal class of $h$, see \cite{MR1263964} Theorem
7.1.3 or \cite{2009arXiv0906.3407T} Theorem 6.4.

\begin{thm}\label{uniqconfHub}
  Let $(M,h)$ and $(M',h')$ be two compact Riemannian surfaces, $u\in
  Pot(M,h)$ and $u'\in Pot(M',h')$. Assume $f$ is an isometry from
  $(M,d_{h,u})$ to $(M',d_{h',u'})$, the $f$ is a conformal
  diffeomorphism from $(M,h)$ to $(M',h')$.
\end{thm}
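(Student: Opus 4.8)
The plan is to exploit the fact that the conformal structure underlying a surface of bounded integral curvature is an \emph{intrinsic} metric invariant, so that any isometry automatically respects it. Since $h$ is smooth, I would first cover $M$ by isothermal charts in which $h=e^{2\rho}|dz|^2$ with $\rho$ smooth; then $e^{2u}h=e^{2(u+\rho)}|dz|^2$, so the very same complex coordinate $z$ presents $d_{h,u}$ locally as a conformal (albeit singular) metric $e^{2\lambda}|dz|^2$ with $\lambda=u+\rho\in Pot$. In particular the Riemann surface structure attached to $d_{h,u}$, built out of these isothermal coordinates, is exactly the conformal structure of $(M,h)$; the analogous statement holds for $d_{h',u'}$ and $(M',h')$. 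This reduces the theorem to showing that $f$ is holomorphic, up to orientation, with respect to these two complex structures.

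Next I would read the isometry $f$ in the isothermal coordinates as a homeomorphism $w=F(z)$ between plane domains. The isometry condition says that $F$ pulls the length element $e^{\lambda'}|dw|$ back to $e^{\lambda}|dz|$. Because both metrics are pointwise proportional to the flat metric of the coordinate, carrying no anisotropy, an isometry between them must send infinitesimal round balls to infinitesimal round balls, i.e.\ it has vanishing complex dilatation. Thus $F$ is $1$-quasiconformal, and by Weyl's lemma (equivalently the measurable Riemann mapping theorem with zero Beltrami coefficient) a $1$-quasiconformal homeomorphism is conformal, hence holomorphic or antiholomorphic. Reassembling the charts, $f$ is a conformal diffeomorphism from $(M,h)$ to $(M',h')$.

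The hard part is justifying this infinitesimal conformality with the very weak regularity at hand: $u$ is only a difference of subharmonic functions, so $e^{2u}h$ need not be a smooth Riemannian metric, and at the atoms of the curvature measure (which, since $d\omega=K_h\,dv_h+d\mu$, are exactly the atoms of $d\mu$) it carries genuine cone points. Two points must be handled carefully. First, the quasiconformality of $f$ has to be extracted directly from the length-space definition of $d_{h,u}$ rather than from a smooth isometry condition; the clean route is that tangent cones are metric invariants, so $f$ carries the tangent cone at $x$ isometrically onto that at $f(x)$, and at the points where $d\mu$ has no atom this cone is Euclidean, forcing the infinitesimal map to be a linear similarity, hence conformal for $h$. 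Second, the cone points form a countable set of zero capacity, which is removable for quasiconformal maps, so $F$ extends conformally across it. These two technical points are precisely what Reshetnyak's theory supplies, and it is on that machinery that I would ultimately rely.
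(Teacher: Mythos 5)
The paper does not actually prove this statement: it is Huber's theorem, quoted with references to Reshetnyak (\cite{MR1263964}, Theorem 7.1.3) and Troyanov (\cite{2009arXiv0906.3407T}, Theorem 6.4), so there is no in-paper argument to compare against. Your outline is the standard proof from those sources — isothermal charts identify the conformal structure of $(M,d_{h,u})$ with that of $(M,h)$, the isometry read in coordinates is shown to be $1$-quasiconformal, and Weyl's lemma concludes — so as a reconstruction of the cited proof it is on the right track.

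Treated as a proof, however, the central step is still missing, and you have located it correctly but not closed it. The assertion that an isometry between the length metrics $e^{\lambda}|dz|$ and $e^{\lambda'}|dw|$ ``must send infinitesimal round balls to round balls'' is essentially the whole theorem; for $\lambda$ merely a difference of subharmonic functions this requires Reshetnyak's asymptotic isotropy lemma, namely that $d_{h,u}(z_0,z)/|z-z_0|\to e^{\lambda(z_0)}$ as $z\to z_0$ for all $z_0$ outside a polar set — the existence of a Euclidean tangent cone at a point does not by itself give differentiability of $F$ there, nor membership in $W^{1,2}_{loc}$, so one cannot speak of a complex dilatation yet. The correct bridge is Gehring's metric characterization of quasiconformality (a homeomorphism whose linear dilatation $H(x)$ is bounded outside a countable set is quasiconformal, and $H=1$ a.e.\ then forces conformality). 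Your appeal to ``removability of the cone points'' does not quite work as stated: the atoms of $d\mu$ form a countable but possibly dense set, so its complement need not be open and there is no quasiconformal map ``on the complement'' to extend; the countable exceptional set has to be absorbed into the metric definition of quasiconformality rather than removed afterwards. Since you explicitly defer these points to Reshetnyak's machinery, the proposal is an honest and correctly structured outline, but it reproduces the citation rather than supplying an independent proof.
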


\end{document}